\newcommand{\calT}{\mathcal{T}}
\newcommand{\T}{\calT}
\newcommand{\M}{\mathcal{M}}
\newcommand{\sM}{\mathcal{M}}
\newcommand{\R}{\mathbb{R}}
\newcommand{\eps}{\epsilon}
\newcommand{\sbs}{\subset}
\newcommand{\E}{\mathbb{E}_{\rm WP}^g}
\newcommand{\eg}{\textit{e.g.\@ }}
\newcommand{\ie}{\textit{i.e.\@}}
\def\sys{\mathop{\rm sys}}
\def\area{\mathop{\rm Area}}
\def\arcsinh{\mathop{\rm arcsinh}}
\def\Vol{\mathop{\rm Vol}}
\def\Prob{\mathop{\rm Prob}\nolimits_{\rm WP}^g}
\def\Mod{\mathop{\rm Mod}}
\theoremstyle{plain}
\newtheorem{theorem}{Theorem}
\newtheorem{proposition}[theorem]{Proposition}
\newtheorem{lemma}[theorem]{Lemma}
\newtheorem{remark}[theorem]{Remark}
\newcommand{\be}{\begin{equation}}
\newcommand{\ene}{\end{equation}}
\newcommand{\br}{\begin{remark}}
\newcommand{\er}{\end{remark}}
\newcommand{\bl}{\begin{lemma}}
\newcommand{\el}{\end{lemma}}
\newcommand{\bcor}{\begin{cor}}
\newcommand{\ecor}{\end{cor}}
\newcommand{\bpro}{\begin{pro}}
\newcommand{\epro}{\end{pro}}
\newcommand{\ben}{\begin{enumerate}}
\newcommand{\een}{\end{enumerate}}
\newcommand{\bp}{\begin{proof}}
\newcommand{\ep}{\end{proof}}
\newcommand{\bpo}{\begin{pro}}
\newcommand{\epo}{\end{pro}}
\newcommand{\beq}{\begin{equation*}}
\newcommand{\eeq}{\end{equation*}}
\newcommand{\bear}{\begin{eqnarray}}
\newcommand{\eear}{\end{eqnarray}}
\newcommand{\beqar}{\begin{eqnarray*}}
\newcommand{\eeqar}{\end{eqnarray*}}
\newcommand{\bt}{\begin{theorem}}
\newcommand{\et}{\end{theorem}}
\newcommand{\bex}{\begin{excer}}
\newcommand{\eex}{\end{excer}}
\theoremstyle{definition}
\theoremstyle{remark}
\newtheorem*{rem*}{Remark}
\newtheorem*{ques*}{Question}
\newtheorem*{def*}{Definition}
\newtheorem*{con*}{Construction}
\newtheorem*{thm*}{\bf Theorem}
\newtheorem*{definition*}{Definition}
\newtheorem*{assum*}{Assumption $(\star)$}
\newtheorem*{obs*}{Observation}
\title[Determinants of Laplacians]{Averages of determinants of Laplacians over moduli spaces for large genus}
\author{Yuxin He}
\address{Yau Mathematical Sciences Center and Department of Mathematical Sciences, Tsinghua University, Beijing, China}
\email{hyx21@mails.tsinghua.edu.cn}
\author{Yunhui Wu}
\address{Yau Mathematical Sciences Center and Department of Mathematical Sciences, Tsinghua University, Beijing, China}
\email{yunhui\_wu@tsinghua.edu.cn}
\subjclass[2020]{32G15, 58J52, 57K20}
\begin{document}

\maketitle

\begin{abstract}
Let $\mathcal{M}_g$ be the moduli space of hyperbolic surfaces of genus $g$ endowed with the Weil-Petersson metric. We view the regularized determinant $\log \det(\Delta_{X})$ of Laplacian as a function on $\mathcal{M}_g$ and show that there exists a universal constant $E>0$ such that as $g\to \infty$,
\begin{enumerate}
\item the expected value of $\left|\frac{\log \det(\Delta_{X})}{4\pi(g-1)}-E \right|$ over $\mathcal{M}_g$ has rate of decay $g^{-\delta}$ for some uniform constant $\delta \in (0,1)$;
\item  the expected value of $\left|\frac{\log \det(\Delta_{X})}{4\pi(g-1)}\right|^\beta$ over $\mathcal{M}_g$ approaches to $E^\beta$ whenever $\beta \in [1,2)$.
\end{enumerate}
\end{abstract}

\section{Introduction}
Let $X$ be a closed hyperbolic surface of genus $g$ $(g>1)$. The spectrum of the Laplacian $\Delta_X$ of $X$ on $L^2(X)$ is a discrete closed subset in $\R^{\geq 0}$ and consists of eigenvalues. We enumerate them, counted with multiplicity, in the following increasing order:
\[0=\lambda_0(X)<\lambda_1(X)\leq \lambda_2(X) \leq \cdots \to +\infty.\]
For $z\in \mathbb{C}$, the spectral zeta function of $X$ is given as
\[\zeta_X(z)=\sum_{i=1}^\infty \frac{1}{\lambda_i^z(X)}.\]
From Weyl's Law, the function $\zeta_X(z)$ is well-defined and holomorphic when $\mathrm{Re}(z)>1$. The \emph{regularized determinant} is usually defined by
\[\log \det (\Delta_X)\overset{\text{def}}{=}-\zeta_X'(0),\]
provided that $\zeta_X(\cdot)$ has an analytic extension to $z=0$. It is known from \eg  Hoker-Phong \cite{HP86} and Sarnak \cite{sarnak1987determinants}  that 
 \begin{equation}\label{eq-det-zetaprime}
     \det (\Delta_X)=Z_0^\prime(1)e^{E\cdot 4\pi(g-1)},
 \end{equation}
where 
\[E=\frac{-1+2\log 2\pi+8\xi^\prime(-1)}{8\pi}\approx 0.0538,\]
and $Z_0(s)$ is the Selberg zeta function $$
Z_0(s)=\prod_\gamma\prod_{k=0}^\infty \left(1-e^{(k+s)\ell_\gamma(X)}\right)
$$
where $\gamma$ runs over all primitive closed geodesics on $X$ and $\ell_\gamma(X)$ is the length of $\gamma$ on $X$. It is known by Wolpert \cite{Wol87} that the magnitudes $\left| \log \det(\Delta_X) \right|$ and $\left| \log Z_0'(1) \right|$ are proper functions on moduli space $\sM_g$ of Riemann surfaces of genus $g$, that is, they will be divergent when the surface $X$ goes to the boundary of $\sM_g$.

In this work, we view $\log \det (\Delta_X)$ and $\log Z_0'(1)$ as random variables with respect to the probability measure $\Prob$ on $\sM_g$ given by the Weil-Petersson metric. Naud is the first one to study their asymptotic behaviors for large genus and shows in \cite{naud2023determinants} that

\begin{thm*}[(Naud)]
For any $\eps>0$,
\be\label{e-Naud-1}
\lim\limits_{g\to \infty}\Prob\left(X\in \sM_g; \ \frac{\left| \log Z_0^\prime(1) \right|}{4\pi(g-1)}<\eps\right)=1.
\ene
\end{thm*}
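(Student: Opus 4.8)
The plan is to reduce \eqref{e-Naud-1} to a first-moment estimate on a geodesic functional together with the known spectral gap for random surfaces. Taking logarithms in \eqref{eq-det-zetaprime} one has $\frac{\log\det(\Delta_X)}{4\pi(g-1)}-E=\frac{\log Z_0'(1)}{4\pi(g-1)}$, so \eqref{e-Naud-1} is exactly the assertion that $\frac{1}{g-1}\log Z_0'(1)\to 0$ in probability. Fixing $\epsilon>0$, I would bound $\Prob\bigl(|\log Z_0'(1)|\ge 4\pi\epsilon(g-1)\bigr)$ by the probability that the spectral gap fails plus the probability that a certain geodesic functional is large, and show both tend to $0$.

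For the deterministic heart of the matter, use that $Z_0$ has a simple zero at $s=1$, so $\log Z_0'(1)=\lim_{s\to1^+}\bigl(\log Z_0(s)-\log(s-1)\bigr)$, and expand $-\log Z_0(s)=\sum_{\gamma\ \mathrm{prim}}\sum_{n\ge1}\tfrac1n\,\frac{e^{-ns\ell_\gamma(X)}}{1-e^{-n\ell_\gamma(X)}}$ for $\Re s>1$. With $L_0=L_0(g):=c\log g$ ($c\in(0,2)$ a suitably chosen constant) split the outer sum at $\ell_\gamma(X)=L_0$. The part over $\ell_\gamma(X)\le L_0$ is, uniformly for $s\ge1$, at most $\mathcal S(X):=\sum_{\gamma\ \mathrm{prim},\ \ell_\gamma(X)\le L_0}w(\ell_\gamma(X))$, where $w(\ell):=\tfrac{-\log(1-e^{-\ell})}{1-e^{-\ell}}$ is decreasing, $w(\ell)=O(\ell^{-1}\log(1/\ell))$ as $\ell\to0^+$ and $w(\ell)=O(e^{-\ell})$ as $\ell\to\infty$. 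In the part over $\ell_\gamma(X)>L_0$, the terms with $n\ge2$ together with the correction $\tfrac1{1-e^{-\ell_\gamma}}-1$ are bounded at $s=1$ by $\mathcal T(X):=C\sum_{\gamma\ \mathrm{prim}}e^{-2\ell_\gamma(X)}$, while the main piece $\sum_{\ell_\gamma(X)>L_0}e^{-s\ell_\gamma(X)}=\int_{L_0}^\infty e^{-sL}\,dN_X(L)$ is handled by the prime geodesic theorem $N_X(L)=\mathrm{li}(e^L)+\sum_{1/2<s_j(X)\le1}\mathrm{li}(e^{s_j(X)L})+\mathcal E_X(L)$, with $|\mathcal E_X(L)|\le C_g\,e^{3L/4}\,\mathrm{poly}(L)$ and $C_g$ at most polynomial in $g$: the $\mathrm{li}(e^L)$-term equals $-\log(s-1)+O(\log L_0)$ as $s\to1^+$, exactly absorbing the subtracted $\log(s-1)$, and on the event $\{\lambda_1(X)\ge\tfrac3{16}-\epsilon_0\}$ (so all $s_j(X)\le\tfrac34+2\epsilon_0$) the exceptional terms and the remainder $\int_{L_0}^\infty e^{-sL}\,d\mathcal E_X(L)$ together contribute $O\bigl(C_g\,\mathrm{poly}(L_0)\,e^{-L_0/4}\bigr)+O(g^{1-c'})=o(g)$ by the choice $L_0=c\log g$. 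This gives, deterministically,
\[
\bigl|\log Z_0'(1)\bigr|\ \le\ \mathcal S(X)+\mathcal T(X)+o(g)\qquad\bigl(\lambda_1(X)\ge\tfrac3{16}-\epsilon_0\bigr).
\]

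It remains to feed in three facts. First, $\Prob\bigl(\lambda_1(X)<\tfrac3{16}-\epsilon_0\bigr)\to0$ by the spectral gap theorems of Wu--Xue and of Lipnowski--Wright (any fixed positive gap would do, only altering the exponent $c'$). Second, by Mirzakhani's integration formula and its large-genus form (Mirzakhani--Petri), $\E\bigl[\mathcal T(X)\bigr]=O(1)$ uniformly in $g$ because $\int_0^\infty e^{-2\ell}\,\tfrac{e^\ell+e^{-\ell}-2}{2\ell}\,d\ell<\infty$, and $\E\bigl[\mathcal S(X)\bigr]=O(\log\log g)$ because the weight $w(\ell)\cdot\tfrac{e^\ell+e^{-\ell}-2}{2\ell}$ is $\asymp\log(1/\ell)$ near $\ell=0$ (integrable there) and $\asymp\tfrac1{2\ell}$ near $\ell=L_0$, so $\E[\mathcal S(X)]\lesssim\int_0^{L_0}w(\ell)\,\tfrac{e^\ell+e^{-\ell}-2}{2\ell}\,d\ell=O(\log L_0)$. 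Hence $Y:=\mathcal S(X)+\mathcal T(X)$ has $\E[Y]=o(g)$, so by Markov $\Prob\bigl(Y\ge2\pi\epsilon(g-1)\bigr)\to0$; combining with the displayed bound (valid off $\{\lambda_1<\tfrac3{16}-\epsilon_0\}$, where its error $o(g)<2\pi\epsilon(g-1)$ for $g$ large) gives $\Prob\bigl(|\log Z_0'(1)|\ge4\pi\epsilon(g-1)\bigr)\le\Prob\bigl(\lambda_1(X)<\tfrac3{16}-\epsilon_0\bigr)+\Prob\bigl(Y\ge2\pi\epsilon(g-1)\bigr)\to0$, which is \eqref{e-Naud-1}.

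The step I expect to be the main obstacle is the analysis near $s=1$: extracting the finite value $\log Z_0'(1)$ from the divergent geodesic series amounts to an \emph{effective} prime geodesic theorem, and one needs the remainder $\mathcal E_X(L)$ bounded by $C_g\,e^{3L/4}\,\mathrm{poly}(L)$ on the spectral-gap set with $C_g$ growing slowly enough in $g$ (polynomially is ample) that, after the cutoff $L_0\asymp\log g$, it is beaten by the gain $e^{-L_0/4}$ — this is precisely where the near-optimal spectral gap for random surfaces is indispensable, both to push the exponent of $\mathcal E_X$ below $1$ and to control the exceptional eigenvalues. The remaining care lies in: the uniform-in-$g$ upper bound on primitive-geodesic counting needed to estimate $\mathcal T(X)$ near the thin part; the calibration of the constant $c$ in $L_0=c\log g$ (small enough that $\E[\mathcal S(X)],\E[\mathcal T(X)]$ stay $o(g)$, not so small that the tail fails to be $o(g)$, and below $2$ so the Mirzakhani--Petri counting asymptotics apply up to $L_0$); and checking that the expectations of the non-compactly-supported geodesic functionals $\mathcal S(X),\mathcal T(X)$ are genuinely controlled by Mirzakhani's integration formula with $g$-independent constants near $\ell=0$.
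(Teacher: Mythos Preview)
First, a contextual point: the paper does \emph{not} give its own proof of this statement. It is quoted as Naud's theorem and used as a black box in Section~\ref{s-mt1}; the paper's own contribution is the stronger Theorem~\ref{mainthm2}, from which \eqref{e-Naud-1} follows by Markov's inequality. So the relevant comparison is with (a) Naud's original argument, which the paper summarizes as ``uniform spectral gaps and suitable countings of closed geodesics,'' and (b) the paper's proof of Theorem~\ref{mainthm2}, which goes through the heat--trace representation \eqref{e-z0-1} via $S_X(t)$, splits the $t$--integral at $R(g)\asymp\log g$, and then decomposes the geodesic sum into long/short and simple/non-simple pieces, handling the non-simple part with the Wu--Xue filling--subsurface machinery (Theorem~\ref{wuxue counting} and \cite[Propositions~33, 35]{wu2022random}). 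Your route is genuinely different in flavor: you work directly with the Euler product of $Z_0(s)$ and extract $\log Z_0'(1)$ via an effective prime geodesic theorem rather than through $S_X(t)$. That is a perfectly natural alternative and is closer in spirit to Naud's own approach.

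There is, however, a real gap in your expectation bounds for $\mathcal S(X)$ and $\mathcal T(X)$. Mirzakhani's integration formula (Theorem~\ref{MIF}) applies only to sums over $\Mod_g$--orbits of \emph{simple} multicurves, and the Mirzakhani--Petri density $\frac{e^\ell+e^{-\ell}-2}{2\ell}$ is a limiting statement for primitive geodesics in a \emph{fixed} length window as $g\to\infty$; neither lets you write $\E\big[\sum_{\gamma\ \mathrm{prim}}F(\ell_\gamma)\big]\asymp\int_0^\infty F(\ell)\frac{e^\ell+e^{-\ell}-2}{2\ell}\,d\ell$ uniformly in $g$ over an unbounded (or $g$--dependent) range. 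For the simple part your computations are fine, but the non-simple contribution is not controlled by what you cite: the crude deterministic bound from Lemma~\ref{counting in buser} gives $\sum_{\gamma\ \mathrm{non\text{-}simple}}e^{-2\ell_\gamma}=O(g)$ and $\sum_{\gamma\ \mathrm{non\text{-}simple},\ \ell_\gamma\le L_0}w(\ell_\gamma)=O(gL_0)$, neither of which is $o(g)$, so Markov alone does not conclude. This is exactly the place where the paper (for the stronger theorem) invokes the filling--subsurface counting of Theorem~\ref{wuxue counting} together with the averaged subsurface bounds \cite[Propositions~33, 35]{wu2022random} to push the non-simple expectation below any power of $g$; you need some input of that kind (or, alternatively, to work on an event like the paper's $A(g)$ that also restricts the short--geodesic count, cf.\ \eqref{Ag-2}) before your Markov step goes through.

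A secondary point you flag yourself: the effective prime geodesic theorem with error $C_g\,e^{3L/4}\mathrm{poly}(L)$ and $C_g$ polynomial in $g$ is indeed available (it follows from Selberg's explicit formula with the area made explicit), but it should be stated and cited rather than asserted, since the whole tail estimate hinges on it.
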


\noindent Actually it is shown in \cite{naud2023determinants} that the property in \eqref{e-Naud-1} also holds for the other two models of random hyperbolic surfaces: random cover \cite{MN20, MNP20} and Brooks-Makover \cite{BM04}. In the proof, Naud essentially only requires uniform spectral gaps and suitable countings of closed geodesics for large genus. In this paper, we focus on the Weil-Petersson model and use the techniques developed in \cite{nie2023large,wu2022random} to show that 
 \begin{theorem}\label{mainthm2}
     There exists a uniform constant $0<\delta<1$ such that  as $g\to\infty$,   
\[\frac{1}{\Vol_{\mathrm{WP}}(\sM_g)}\int_{\sM_g}\frac{\left| \log Z_0^\prime(1) \right|}{4\pi(g-1)} dX=O\left(\frac{1}{g^{\delta}}\right),\]
where the implied constant is universal and $\Vol_{\mathrm{WP}}(\sM_g)$ is the Weil-Petersson volume of $\sM_g$.
 \end{theorem}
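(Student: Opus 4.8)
\smallskip
The plan is to combine an exact formula for $\log Z_0'(1)$ in terms of the Selberg zeta function with Mirzakhani's integration formula and the spectral estimates over $\sM_g$ from \cite{wu2022random,nie2023large}. Since $Z_0(s)$ has a simple zero at $s=1$ (coming from $\lambda_0(X)=0$) and no zero in $\{\Re s>1\}$, one has, for every closed hyperbolic surface $X$ of genus $g$ and every $\sigma>1$,
\[
\log Z_0'(1)=\log Z_0(\sigma)-\log(\sigma-1)-\int_1^{\sigma}\left(\frac{Z_0'(u)}{Z_0(u)}-\frac{1}{u-1}\right)du ,
\]
where $\dfrac{Z_0'(u)}{Z_0(u)}=\sum_{\gamma}\sum_{n\geq1}\dfrac{\ell_\gamma(X)\,e^{-nu\ell_\gamma(X)}}{1-e^{-n\ell_\gamma(X)}}$ and $\log Z_0(\sigma)=-\sum_{\gamma}\sum_{n\geq1}\dfrac1n\dfrac{e^{-n\sigma\ell_\gamma(X)}}{1-e^{-n\ell_\gamma(X)}}$, the sums ranging over primitive closed geodesics $\gamma$. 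We take $\sigma=\tfrac32$. The term $\log Z_0(\tfrac32)$ is an absolutely convergent geodesic sum; the integral is where the pole of $Z_0'/Z_0$ at $s=1$ is subtracted off, and unwinding it through the prime geodesic theorem (equivalently, the Selberg trace formula applied to a suitable test function) shows that its size is governed by the discrepancy between the geodesic counting function $N_X(\ell)=\#\{\gamma:\ell_\gamma(X)\leq\ell\}$ and its main term $\mathrm{li}(e^\ell)$, together with the eigenvalues $0<\lambda_j(X)<\tfrac14$. Running this bookkeeping — essentially a quantitative version of the argument of \cite{naud2023determinants}, and the place where the spectral information enters — one obtains, for each fixed small $\epsilon>0$ and on the event $\{\lambda_1(X)\geq\epsilon\}$, a pointwise bound
\[
\bigl|\log Z_0'(1)\bigr|\ \lesssim_{\epsilon}\ \sum_{\gamma}\sum_{n\geq1}\frac1n\frac{e^{-\frac32 n\ell_\gamma(X)}}{1-e^{-n\ell_\gamma(X)}}\ +\ \sqrt{g}\ +\ \sum_{0<\lambda_j(X)<\frac14-\epsilon}\frac{1}{\lambda_j(X)} ,
\]
whereas on all of $\sM_g$ the expansion together with Wolpert's degeneration estimates \cite{Wol87} near $\partial\sM_g$ gives the crude bound $\bigl|\log Z_0'(1)\bigr|\lesssim \ls(X)^{-1}+\lss(X)^{-2}+g$.

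\smallskip
It then remains to estimate the $\WP$-average of each term. For the geodesic sum, split at a threshold $L\asymp\log g$. On $\{\ell_\gamma\leq L\}$, Mirzakhani's integration formula together with the volume estimates used in \cite{nie2023large,wu2022random} yields $\E[N_X(\ell+d\ell)-N_X(\ell)]\lesssim\frac{e^\ell}{\ell}\,d\ell$ for $\ell\leq L$, so, using $-\log(1-x)\leq 2x$ for small $x$,
\[
\E\Bigl[\sum_{1\leq\ell_\gamma\leq L}\sum_{n\geq1}\tfrac1n\tfrac{e^{-\frac32 n\ell_\gamma}}{1-e^{-n\ell_\gamma}}\Bigr]\ \lesssim\ \int_1^L \frac{e^{-\ell/2}}{\ell}\,d\ell\ =\ O(1),
\]
while the genuinely short part $\{\ell_\gamma<1\}$ (automatically simple and pairwise disjoint) contributes $\lesssim\int_0^1\frac{\log(1/\ell)}{\ell}\cdot\ell\,d\ell=O(1)$ uniformly in $g$. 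The tail $\{\ell_\gamma>L\}$ is bounded pointwise via the universal counting estimate $N_X(\ell)\lesssim g\,e^\ell$ (valid for $\ell\geq1$ and all genera, from the prime geodesic theorem with a uniform error term), which gives $\sum_{\ell_\gamma>L}e^{-\frac32\ell_\gamma}\lesssim g\,e^{-L/2}=O(g^{1-c_0})$; after dividing by $4\pi(g-1)$ this is $O(g^{-c_0})$.

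\smallskip
For the spectral terms, $\sqrt g$ contributes $O(g^{-1/2})$ after normalization. On $\{\lambda_1(X)\geq\epsilon\}$ the small–eigenvalue sum is $\leq\epsilon^{-1}\,\#\{j:0<\lambda_j(X)<\tfrac14-\epsilon\}$, and a quantitative Benjamini–Schramm-type bound — obtainable from the method of \cite{wu2022random} — controls its $\WP$-expectation by $O(g^{1-\delta_1})$ for some $\delta_1>0$, hence $O(g^{-\delta_1})$ after normalization. On the complement $\{\lambda_1(X)<\epsilon\}$ a short separating multicurve is forced, so by the collar lemma $\lambda_1(X)^{-1}\lesssim\lss(X)^{-2}$; combining the crude pointwise bound with Mirzakhani's integration formula over the thin part of $\sM_g$ and the estimates of \cite{nie2023large} on lengths of short separating geodesics — and using $\Prob(\lambda_1(X)<\epsilon)=O(g^{-\delta_2})$ with a polynomial rate from \cite{wu2022random} — shows $\int_{\{\lambda_1<\epsilon\}}\bigl|\log Z_0'(1)\bigr|\,dX=O(g^{1-\delta_2})$. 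Summing the contributions and taking $\delta=\min\{c_0,\delta_1,\delta_2,\tfrac12\}$ yields the claimed rate.

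\smallskip
The main obstacle is the spectral input: one needs genuinely \emph{quantitative} control — a polynomial saving in $g$, not merely an $o(g)$ bound — both on $\Prob(\lambda_1(X)<\epsilon)$ and on $\E[\#\{j:\lambda_j(X)<\tfrac14-\epsilon\}]$, uniformly over $\sM_g$, and one must verify that the machinery of \cite{wu2022random,nie2023large} delivers such a power saving for precisely these functionals (the exact exponent is immaterial, only its positivity). By contrast, once the length ranges are separated as above, the geodesic-sum estimates are a fairly routine use of Mirzakhani's integration formula and the Mirzakhani–Zograf volume asymptotics.
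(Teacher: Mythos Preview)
Your sketch has the right skeleton --- split $\sM_g$ into a good set with a spectral gap and its complement, and push the analysis of $\log Z_0'(1)$ through the geodesic side of the trace formula --- but you have inverted where the real difficulty lies, and two of your black boxes are not actually available.

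\textbf{The non-simple geodesics are not ``routine.''} Mirzakhani's integration formula (Theorem~\ref{MIF}) applies only to \emph{simple} multicurves, so it cannot by itself give a uniform bound such as $\E[N_X(\ell+d\ell)-N_X(\ell)]\lesssim e^{\ell}\,d\ell/\ell$ valid for \emph{all} primitive closed geodesics and $\ell\leq L\asymp\log g$. The paper spends most of Section~\ref{s-mt2} on exactly this point: one passes from a non-simple $\gamma$ to the subsurface $Y$ it fills, applies the Wu--Xue filling-count (Theorem~\ref{wuxue counting}) to bound $\#_f(Y,T)$ by $e^{T-\tfrac{1-\epsilon}{2}\ell(\partial Y)}$, and only \emph{then} integrates over $\sM_g$ via Mirzakhani's formula applied to the simple multicurve $\partial Y$, using the volume estimates \cite[Prop.\ 33, 35]{wu2022random} stratified by $|\chi(Y)|$. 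This is not a citation but the technical core of the proof; your claim that the geodesic-sum estimate is a ``fairly routine use of Mirzakhani's integration formula'' is precisely where your argument breaks.

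\textbf{The spectral input you need is stronger than what exists.} You correctly flag that a power saving on $\Prob(\lambda_1<\epsilon)$ is needed --- this is in \cite[Subsection~7.3]{wu2022random}, see \eqref{Ag-1} --- but you also require a power saving for $\E\bigl[\#\{j:0<\lambda_j<\tfrac14-\epsilon\}\bigr]$, and this is not proved in \cite{wu2022random,nie2023large}. The paper avoids the small-eigenvalue \emph{count} entirely: on the good set $A(g)$ one uses only $\lambda_1\geq\eta$, which gives the heat-kernel bound $|S_X(t)-1|\prec g\,e^{-\eta t}$ and hence controls $\int_{R(g)}^\infty$ directly. On the bad set $A(g)^c$ one does not attempt any pointwise crude bound as you do; instead one applies H\"older with exponent $p\in(1,2)$ together with the $L^p$ integrability of $\log\det(\Delta_X)/(4\pi(g-1))$ from Lemma~\ref{finite-C-beta}, which in turn rests on Proposition~\ref{sysbetacritical}. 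This cleanly sidesteps both your $\sum\lambda_j^{-1}$ term (whose expectation you cannot control) and your degeneration bound $|\log Z_0'(1)|\lesssim \ls^{-1}+\lss^{-2}+g$ (which is not what the collar lemma or Wolpert's asymptotics actually give --- in particular $\lambda_1^{-1}\lesssim\lss^{-2}$ is false in general).

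In short: replace your Dirichlet-series-at-$\sigma=\tfrac32$ decomposition by the heat-kernel identity \eqref{e-z0-1}, treat $A(g)^c$ by H\"older against Lemma~\ref{finite-C-beta}, and for the $t\leq R(g)$ range carry out the simple/non-simple split with the filling-subsurface machinery. That is the paper's argument, and the non-simple part is where the work is.
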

\noindent Actually, using our method, the $\delta$ can be taken as greater than $0.1$. It would be \emph{interesting} to study the optimal choice of $\delta$ in Theorem \ref{mainthm2}. By using Markov's inequality, Theorem \ref{mainthm2} clearly implies the result of Naud above. Our proof is very different from \cite{naud2023determinants}. Moreover, together with \eqref{eq-det-zetaprime} Theorem \ref{mainthm2} also implies that
\[\lim\limits_{g\to \infty}\frac{1}{\Vol_{\mathrm{WP}}(\sM_g)}\int_{\sM_g}\frac{\left| \log \det (\Delta_X) \right|}{4\pi(g-1)} dX=E.\]

Indeed, our second result is as follows.
 \begin{theorem}\label{mainthm1}
For any $\beta\in (0,2)$,
\[\lim\limits_{g\to \infty} \frac{1}{\Vol_{\mathrm{WP}}(\sM_g)}\int_{\sM_g}\left|\frac{\log\det (\Delta_X)}{4\pi(g-1)}\right|^\beta dX=E^\beta.\]
For any $\beta\geq 2$,
\[\int_{\sM_g}\left|\log\det (\Delta_X)\right|^\beta dX=\infty.\]
 \end{theorem}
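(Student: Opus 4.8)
The identity \eqref{eq-det-zetaprime} reduces everything about $\log\det(\Delta_X)$ to a statement about $\log Z_0'(1)$: since $\log\det(\Delta_X)=\log Z_0'(1)+E\cdot 4\pi(g-1)$, we have
\[
\left|\frac{\log\det(\Delta_X)}{4\pi(g-1)}\right|^\beta=\left|E+\frac{\log Z_0'(1)}{4\pi(g-1)}\right|^\beta.
\]
Write $Y_g(X)=\frac{|\log Z_0'(1)|}{4\pi(g-1)}\ge 0$. Theorem~\ref{mainthm2} gives $\E[Y_g]=O(g^{-\delta})\to 0$, i.e. $Y_g\to 0$ in $L^1$. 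The first assertion ($\beta\in(0,2)$) should follow purely from this $L^1$-convergence together with a \emph{uniform integrability} statement for the family $\{Y_g^\beta\}$, and the second assertion ($\beta\ge 2$) should follow from the fact, due to Wolpert \cite{Wol87}, that $|\log\det(\Delta_X)|$ blows up fast enough near the boundary of $\sM_g$ that its $\beta$-th power is not WP-integrable once $\beta\ge 2$.

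\textbf{Step 1: the case $\beta\in(0,2)$.} First I would record the elementary inequality $\big|\,|E+t|^\beta-E^\beta\,\big|\le C_\beta\big(|t|+|t|^\beta\big)$ for $t\ge -E$ (splitting into $|t|\le E$, where one uses that $s\mapsto s^\beta$ is Lipschitz away from $0$, and $|t|>E$, where $|E+t|^\beta\le (2|t|)^\beta$). Applying this with $t=\pm Y_g$ and integrating,
\[
\left|\,\E\!\left[\left|\tfrac{\log\det(\Delta_X)}{4\pi(g-1)}\right|^\beta\right]-E^\beta\right|
\le C_\beta\big(\E[Y_g]+\E[Y_g^\beta]\big).
\]
By Theorem~\ref{mainthm2}, $\E[Y_g]=O(g^{-\delta})$. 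For the term $\E[Y_g^\beta]$: when $\beta\le 1$, Jensen/concavity gives $\E[Y_g^\beta]\le(\E[Y_g])^\beta=O(g^{-\beta\delta})$ and we are done. When $1<\beta<2$, I would interpolate: $\E[Y_g^\beta]\le(\E[Y_g])^{2-\beta}\cdot(\E[Y_g^2])^{\beta-1}$ by Hölder with exponents $\tfrac1{2-\beta},\tfrac1{\beta-1}$. So it suffices to show $\E[Y_g^2]$ is bounded, in fact it suffices that $\E[Y_g^2]=O(g^{o(1)})$, since $(\E[Y_g])^{2-\beta}=O(g^{-(2-\beta)\delta})$ kills any subpolynomial growth. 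Thus the real content of Step~1 is a second-moment (or just a uniform-integrability) bound. I expect this is exactly where the techniques of \cite{nie2023large,wu2022random} invoked for Theorem~\ref{mainthm2} already deliver what is needed: the proof of Theorem~\ref{mainthm2} presumably controls $\E[\,|\log Z_0'(1)|\,]$ via a decomposition into a "short-geodesic" contribution (bounded using Mirzakhani-type integration formulas and the asymptotics of WP volumes) and a "bulk" contribution (bounded using uniform spectral gaps and geodesic counting); rerunning that decomposition with the weight $|\log Z_0'(1)|^2$ should give $\E[Y_g^2]=O(1)$, with the dominant term again coming from surfaces with a short separating or non-separating geodesic, whose WP-probability is $O(g^{-1})$ while $|\log Z_0'(1)|^2$ there is only $O((\log g)^{2})$-ish. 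I would state this as a lemma: \emph{there is a constant $C$ with $\E\big[(\log Z_0'(1))^2\big]\le C\,(4\pi(g-1))^2\,g^{-\varepsilon_0}$} for some $\varepsilon_0>0$, or even just $\E[Y_g^2]\le C$, and prove it by copying the proof of Theorem~\ref{mainthm2} with the square inserted.

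\textbf{Step 2: the case $\beta\ge 2$, divergence.} Here the point is a lower bound near the boundary. By Wolpert's analysis of the behavior of $\det(\Delta_X)$ as $X$ degenerates \cite{Wol87}, when a non-separating geodesic has length $\ell\to 0$ one has an expansion $\log Z_0'(1)=-\frac{\pi^2}{3\ell}+O(\log(1/\ell))$ (the leading term coming from the contribution of the pinching geodesic to the Selberg zeta function, cf. the analysis in \cite{Wol87} and \cite{sarnak1987determinants}), hence $|\log\det(\Delta_X)|\asymp \ell^{-1}$ on the thin part $\{\ell_\gamma(X)\le\varepsilon\}$. On the other hand, in Fenchel--Nielsen coordinates adapted to a pants decomposition containing $\gamma$, the WP volume form is comparable to $\prod_i \ell_i\,d\ell_i\,d\tau_i$ near the boundary stratum, so the WP-measure of $\{\varepsilon/2\le\ell_\gamma\le\varepsilon\}$ is $\asymp \varepsilon^2$ times a volume factor, i.e. $\int_{\{\ell_\gamma\le\varepsilon\}}\ell_\gamma^{-2\beta}\,dX\asymp\int_0^\varepsilon \ell^{-2\beta}\cdot\ell\,d\ell=\int_0^\varepsilon \ell^{1-2\beta}\,d\ell$, which diverges precisely when $1-2\beta\le-1$, i.e. $\beta\ge 1$ — so already $\beta\ge 1$ would give divergence of $\int\ell_\gamma^{-2\beta}$, but of course we need the bound to persist for $\log\det$ itself, and $|\log\det(\Delta_X)|^\beta\gtrsim \ell^{-\beta}$ only, giving the integrand $\ell^{-\beta}\cdot\ell=\ell^{1-\beta}$, divergent iff $\beta\ge 2$. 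That matches the stated threshold, so I would: (i) cite/derive the lower bound $|\log Z_0'(1)|\ge c/\ell_{\rm sys}(X)$ on the thin part $\{\ell_{\rm sys}(X)\le\varepsilon_0\}$ from \cite{Wol87}; (ii) use \eqref{eq-det-zetaprime} to transfer it to $|\log\det(\Delta_X)|\ge c/\ell_{\rm sys}(X)-C g$ (harmless, since on the relevant region $1/\ell_{\rm sys}$ dominates); (iii) restrict the integral to a single boundary chart, use the Fenchel--Nielsen/Wolpert form of the WP volume to lower-bound $\int_{\sM_g}|\log\det(\Delta_X)|^\beta dX\ge c\int_0^{\varepsilon_0}\ell^{-\beta}\,\ell\,d\ell\cdot(\text{positive const})=\infty$ for $\beta\ge 2$.

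\textbf{Main obstacle.} The divergence direction (Step~2) is essentially bookkeeping once Wolpert's degeneration asymptotics and the local structure of the WP form are quoted; the genuine work is the second-moment bound $\E[Y_g^2]=O(1)$ in Step~1. The subtlety is that $\log Z_0'(1)$ is built from an infinite geodesic sum whose tail, while exponentially small on a fixed compact part of $\sM_g$, must be controlled uniformly as $g\to\infty$ and simultaneously squared; one must check that squaring does not spoil the absolute convergence of the geodesic-counting estimates and that the short-geodesic region — where the integrand is largest — still contributes $o(1)$ after multiplication by the appropriately small WP volume. I would isolate this as the one new lemma, prove it by adapting the already-cited argument behind Theorem~\ref{mainthm2}, and then assemble the two cases as above.
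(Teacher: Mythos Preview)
Your Step~2 is essentially what the paper does: it packages the degeneration lower bound as Lemma~\ref{lowbound} (derived directly from~\eqref{eq-def-logdetlaplace} rather than by quoting \cite{Wol87}) and then invokes Proposition~\ref{sysbetacritical}, which is exactly the Fenchel--Nielsen computation you sketch showing $\int_{\sM_g}\sys(X)^{-\beta}\,dX=\infty$ for $\beta\ge 2$.

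Your Step~1, however, has a genuine gap in the range $1<\beta<2$. The lemma you isolate as the ``main obstacle''---a bound of the form $\E[Y_g^2]=O(1)$ or even $\E[Y_g^2]=O(g^{o(1)})$---is \emph{false}: for every fixed $g$ one has $\E[Y_g^2]=+\infty$. Indeed, this is nothing other than the $\beta=2$ case of the divergence statement you yourself prove in Step~2, since $Y_g$ and $|\log\det(\Delta_X)|/(4\pi(g-1))$ differ by at most the constant $E$. Hence the interpolation $\E[Y_g^\beta]\le(\E[Y_g])^{2-\beta}(\E[Y_g^2])^{\beta-1}$ is vacuous, and no rerunning of the proof of Theorem~\ref{mainthm2} with a square inserted can rescue it: the short-geodesic contribution is already non-integrable at the endpoint $\beta=2$, not merely large.

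The paper avoids this by never touching $L^2$. It first proves (Lemma~\ref{finite-C-beta}) that $\sup_g\E\big[|\log\det(\Delta_X)/(4\pi(g-1))|^{\beta'}\big]<\infty$ for every $\beta'<2$, and it does so \emph{not} via the Selberg-zeta machinery behind Theorem~\ref{mainthm2} but from Naud's pointwise inequality bounding $|\log\det(\Delta_X)|/(4\pi(g-1))$ by $1+|\log\lambda_*(X)|+\sys(X)^{-1}+\cdots$, combined with Proposition~\ref{sysbetacritical}. With this uniform integrability in hand, the paper then uses Naud's convergence in probability~\eqref{e-Naud-1} plus H\"older with exponents $p,q$ satisfying $q\beta<2$. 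In your framework the correct repair is to interpolate between $L^1$ and $L^{\beta'}$ for some $\beta<\beta'<2$,
\[
\E[Y_g^\beta]\ \le\ (\E[Y_g])^{(\beta'-\beta)/(\beta'-1)}\,(\E[Y_g^{\beta'}])^{(\beta-1)/(\beta'-1)},
\]
which does tend to zero once you have Lemma~\ref{finite-C-beta}. After that fix your organization is arguably a bit cleaner than the paper's, since it uses the $L^1$ convergence of Theorem~\ref{mainthm2} directly and avoids the $\liminf/\limsup$ split; but the uniform $L^{\beta'}$ bound for $\beta'<2$---not an $L^2$ bound---is the indispensable input.
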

\noindent For any $\beta\in (0,1)$, this is due to Naud \cite{naud2023determinants}.

\subsection*{Notations.} We say two positive functions $$f_1(g)\prec f_2(g)\quad \emph{or} \quad f_2(g)\succ f_1(g) \quad \emph{or} \quad f_1(g)=O(f_2(g))$$ if there exists a universal constant $C>0$, independent of $g$, such that $f_1(g) \leq C \cdot f_2(g)$; and we say $$f_1(g) \asymp f_2(g)$$ if $f_1(g)\prec f_2(g)$ and $f_2(g)\prec f_1(g)$.

\subsection*{Plan of the paper.} In Section \ref{s-pre} we will provide a review of relevant background materials and show that for $\beta>0$, $\E\left[ 
  \left( \frac{1}{\sys(X)} \right)^\beta\right]\asymp 1$ if and only if $\beta\in (0,2)$ which will be applied in the proof of Theorem \ref{mainthm1}. Then by assuming \eqref{e-Naud-1} and closely following \cite{naud2023determinants}, in Section \ref{s-mt1} we will complete the proof of Theorem \ref{mainthm1}. In the last Section, we will use the techniques developed in \cite{nie2023large,wu2022random} to finish the proof of Theorem \ref{mainthm2}.

\subsection*{Acknowledgements.}
We would like to thank all the participants in our seminar on Teichm\"uller theory for helpful discussions and comments on this project. We also would like to thank Fr\'ed\'eric Naud for his interests on this work.

\section{Preliminary}\label{s-pre}
In this section, we set up notations and basic propositions on the regularized determinant of the Laplacian and the Weil-Petersson model of random surfaces.
\subsection{Determinants of Laplacians}
 Let $X_{g,n}$ with $2g-2+n\geq 1$ be a complete hyperbolic surface of genus $g$ with $n$ geodesic boundaries. By the Gauss-Bonnet formula, its hyperbolic area satisfies $\mathrm{Area}(X_{g,n})=2\pi(2g-2+n)$.  For a closed geodesic $\gamma$ on $X_{g,n}$, we use $\ell(\gamma)$ or $\ell_\gamma(X_{g,n})$ to denote its length. 

Now let $X=X_g$ be a closed hyperbolic surface of genus $g$ $(g>1)$ and $\Delta_X$ be its Laplacian. Recall that as in the introduction, the regularized determinant $\log \det (\Delta_X)=-\zeta_X'(0)$. Indeed, it is known from \eg \cite{HP86, sarnak1987determinants, naud2023determinants} that it also satisfies the following identity
\begin{equation}\label{eq-def-logdetlaplace}
\log \det\Delta_X=4\pi(g-1)E+\gamma_0-\int_{0}^1\frac{S_X(t)}{t}dt-\int_1^\infty\frac{S_X(t)-1}{t}dt,
\end{equation}
where $\gamma_0$ is the Euler constant, $E\approx 0.0538$ is the constant given in \eqref{eq-det-zetaprime}, and $$S_X(t)=\frac{e^{-t/4}}{(4\pi t)^\frac{1}{2}}\sum_{k\geq 1}\sum_{\gamma\in \mathcal{P}} \frac{\ell(\gamma)}{2\sinh(k\ell(\gamma)/2)}e^{-(k\ell(\gamma))^2/4t}.$$ Here $\mathcal{P}$ is the set of oriented primitive closed geodesics on $X$.
One may see \cite{naud2023determinants} and references therein for more details.
Selberg's trace formula gives the following identity:
\begin{equation}\label{sel-trace-formula}
    \sum_{j=0}^\infty e^{-t\lambda_j}=S_X(t)+\frac{4\pi(g-1)e^{-\frac{t}{4}}}{4\pi}\int_{-\infty}^{\infty}re^{-tr^2}\tanh(\pi r)dr.
\end{equation}
See \cite[Theorem 9.5.3]{buser2010geometry} 
 or \cite[Theorem 5.6]{bergeron2016spectrum} for a 
general form of Selberg's trace formula. One may also see \eg \cite{selberg1956harmonic} and \cite{hejhal2006selberg}.

\subsection{Counting geodesics} We will apply the following two countings on closed geodesics. The first one is for general closed geodesics. One may see \eg \cite[Lemma 6.6.4]{buser2010geometry}.
\begin{lemma}\label{counting in buser}Let $S$ be a hyperbolic surface of genus $g\geq 2$ and let $L>0$. There are at most $(g-1)e^{L+6}$ oriented closed geodesics on $S$ of length $\leq L$ which are not iterates of closed geodesics of length $\leq 2\arcsinh 1$.
\end{lemma}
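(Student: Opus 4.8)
The plan is to prove this by a thick--thin decomposition of $S$ together with the exponential volume growth of balls in the hyperbolic plane. First I would fix, via the collar lemma, a universal constant $\eps_0\in(0,\arcsinh 1]$ and set $S^{\mathrm{thin}}=\{x\in S:\inj_S(x)<\eps_0\}$, $S^{\mathrm{thick}}=S\setminus S^{\mathrm{thin}}$; the standard thick--thin decomposition says $S^{\mathrm{thin}}$ is a disjoint union of embedded annular collars, each about a simple closed geodesic of length $<2\eps_0\le 2\arcsinh 1$, so that the only closed geodesics contained in $S^{\mathrm{thin}}$ are iterates of these short cores, while $\inj_S(x)\ge\eps_0$ for every $x\in S^{\mathrm{thick}}$. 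Now let $c$ be one of the oriented closed geodesics we must count; its primitive $\gamma_0$ satisfies $\ell(\gamma_0)>2\arcsinh 1$, so $c$ cannot be contained in $S^{\mathrm{thin}}$ --- being connected it would otherwise lie in a single collar and hence be an iterate of that collar's short core, contradicting $\ell(\gamma_0)>2\arcsinh 1$. Therefore \emph{every} geodesic to be counted meets $S^{\mathrm{thick}}$.

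Second, I would cover $S^{\mathrm{thick}}$ economically. Take a maximal $\eps_0$-separated subset $\{x_1,\dots,x_N\}\subset S^{\mathrm{thick}}$; then the balls $B(x_j,\tfrac{\eps_0}{2})$ are embedded (as $\tfrac{\eps_0}{2}<\inj_S(x_j)$) and pairwise disjoint, while the balls $B(x_j,\eps_0)$ cover $S^{\mathrm{thick}}$ by maximality. Comparing areas with a hyperbolic disc of radius $\tfrac{\eps_0}{2}$ and using $\area(S)=4\pi(g-1)$ gives $N\le c_0(g-1)$ with $c_0$ universal. It then suffices to bound, for each fixed $j$, the number of closed geodesics $c$ with $\ell(c)\le L$ that meet $B(x_j,\eps_0)$.

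Third, I would establish this per-ball bound as a lattice-point count in $\H$. Lift $x_j$ to $\tilde x_j\in\H$ and write $\Gamma=\pi_1(S)$ acting by deck transformations. Given such a $c$ and a point $y\in c\cap B(x_j,\eps_0)$, choose a lift $\tilde y\in B_\H(\tilde x_j,\eps_0)$ of $y$; let $\tilde c$ be the lift of $c$ through $\tilde y$ and $B\in\Gamma$ the transformation with axis $\tilde c$ and translation length $\ell(c)$. The triangle inequality gives $\dist(\tilde x_j,B\tilde x_j)\le 2\dist(\tilde x_j,\tilde y)+\ell(c)<L+2\eps_0$, and $c\mapsto B$ is injective since $c$ is the unique oriented closed geodesic of length $\ell(c)$ whose image is $\pi(\tilde c)$ --- which also accounts for iterates, as $\gamma_0^{\,k}$ is sent to $A^k$ for the primitive $A$. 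Hence the $j$-th count is at most $\#\{B\in\Gamma:\dist(\tilde x_j,B\tilde x_j)<L+2\eps_0\}$; since the $\Gamma$-translates of $B_\H(\tilde x_j,\tfrac{\eps_0}{2})$ are pairwise disjoint (using $\inj_S(x_j)\ge\eps_0$) and all lie in $B_\H(\tilde x_j,L+\tfrac52\eps_0)$, this is at most $\Vol\bigl(B_\H(L+\tfrac52\eps_0)\bigr)/\Vol\bigl(B_\H(\tfrac{\eps_0}{2})\bigr)\le c_1e^{L}$ with $c_1$ universal, using $\Vol(B_\H(r))=2\pi(\cosh r-1)\le 2\pi e^{r}$. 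Summing over $j\le N$ yields a bound of the shape $c_0c_1(g-1)e^{L}$.

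The step I expect to be the real obstacle is purely quantitative: promoting $c_0c_1(g-1)e^L$ to the stated $(g-1)e^{L+6}$. This forces one to commit to an explicit legitimate value of $\eps_0$ and to optimise the covering and volume ratios --- most likely replacing the crude ball cover by a sharper one, \eg a geodesic triangulation of $S$ into $O(g-1)$ pieces of universally bounded diameter --- so that the accumulated universal constants fit under $e^{6}$. The geometric skeleton (the thick--thin split, the fact that the counted geodesics avoid the thin part, and the per-chunk lattice-point bound) is standard, and is what I would write out carefully first; pinning down the constant is then bookkeeping.
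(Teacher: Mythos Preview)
The paper does not prove this lemma at all; it simply quotes it from Buser's book \cite[Lemma~6.6.4]{buser2010geometry}, so there is no in-paper argument to compare against. Your proposed route---thick--thin decomposition to force every counted geodesic into the thick part, an economical cover of the thick part by $O(g-1)$ balls of universal radius, and a lattice-point count in $\H$ for each ball---is exactly the standard strategy and is, in outline, what Buser does. The argument is correct as sketched; the only remaining work, as you yourself identify, is committing to an explicit $\eps_0$ (Buser takes essentially $\arcsinh 1$) and tracking the constants so that the product fits under $e^{6}$, which is pure bookkeeping rather than a new idea.
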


Recall that a  closed geodesic $\gamma$ is called \emph{filling} in a hyperbolic surface $Y$ with geodesic boundaries if each component of $Y\setminus \gamma$ is homotopic to a single point or some boundary component of $Y$. The second counting is the following bound for filling closed geodesics that  is essential in the proof of Theorem \ref{mainthm2}. One may see \cite[Theorem 4]{wu2022random} or \cite[Theorem 18]{WX22pgt} for more details. 
\begin{theorem}[(Wu-Xue)]\label{wuxue counting}
For any  $0<\eps<\frac{1}{2}$ and $m=2g-2+n\geq 1$, there exists
	a constant $c(\eps,m)>0$ only depending on $\eps$ and $m$ such that for all $L>0$ and any compact hyperbolic surface $Y$ of genus $g$ with $n$ boundary simple closed geodesics, the following inequality holds:
	$$\#_{f}(Y,L)\leq c(\eps,m)\cdot  e^{L-\frac{1-\eps}{2}\ell(\partial Y)},$$
	where $\#_{f}(Y,L)$ is the number of filling closed geodesics in $Y$ of length $\leq L$, and 
 $\ell(\partial Y)$ is the total length of the boundary closed geodesics of $Y$.
\end{theorem}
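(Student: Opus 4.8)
The plan is to split the estimate into a \emph{universal} exponential count of closed geodesics and the additional constraint coming from the word \emph{filling}. The factor $e^{L}$ is just the growth rate of all closed geodesics: by Lemma~\ref{counting in buser} there are at most $c(m)\,e^{L}$ closed geodesics of length $\le L$ on a surface of complexity $m=2g-2+n$ that are not iterates of very short geodesics, and the iterates of the finitely many short filling geodesics (which exist only for bounded $\ell(\partial Y)$) are controlled by a trivial count. So the whole content is the gain $e^{-\frac{1-\eps}{2}\ell(\partial Y)}$. This gain must come from the one structural property that singles out filling geodesics: each of them \emph{encloses every} boundary geodesic, so a definite amount of the geodesic's length is ``spent going around the boundary''.

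First I would isolate the underlying length bookkeeping. Put $\gamma$ in minimal position; since two geodesics of $\mathbb{H}^2$ meet at most once, the self-intersections of $\gamma$ are transverse double points and the embedded $4$-valent graph $G_\gamma\subset Y$ they cut out has no bigon faces. As $\gamma$ fills, the components of $Y\setminus G_\gamma$ are topological disks together with, for each boundary component $\beta_i$ of length $\ell_i$, exactly one half-annulus $A_i\supset\beta_i$; its inner boundary circle $\delta_i:=\partial A_i\setminus\beta_i$ is a \emph{simple} closed subloop of $\gamma$, freely homotopic in $Y$ to $\beta_i$, so $\ell(\delta_i)\ge\ell_i$. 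Since every edge of $G_\gamma$ borders at most two faces, $\sum_i\ell(\delta_i)\le 2\,\ell(E)$, where $E\subset\gamma$ is the union of the edges used by the $\delta_i$'s; hence
\[
\ell(E)\ \ge\ \tfrac12\,\ell(\partial Y),\qquad\text{so}\quad \ell(\gamma\setminus E)\ \le\ L-\tfrac12\,\ell(\partial Y).
\]

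The counting step must convert this into a bound on the \emph{number} of such $\gamma$. The scheme is to bound $\#_{f}(Y,L)$ by the number of possible enclosing parts $E$ times the number of completions of a given $E$ to a closed geodesic of length $\le L$. A Buser/entropy-type estimate --- a routine variant of the proof of Lemma~\ref{counting in buser} for unions of geodesic segments of bounded total length on a surface of complexity $m$ --- bounds the number of completions by $c(m)\,e^{L-\frac12\ell(\partial Y)}$, and it remains to bound the number of enclosing parts $E$ by a quantity depending only on $m$ and $\eps$. For the latter one has to exploit that an enclosing loop $\delta_i$, homotopic to $\beta_i$, is far \emph{less} flexible than a free piece of geodesic of the same length: it should be forced --- after paying the loss $(1-\eps)$ for the corners of the $\delta_i$ and for the thin collars around long $\beta_i$ --- into one of only $c(\eps,m)$ combinatorial patterns, with the extra length of an unusually long $\delta_i$ traded off against the larger number of patterns it allows.

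I expect the crux --- and the step I am least sure how to make clean --- to be exactly this last point: that the enclosing part is genuinely \emph{rigid}, not merely long. Upgrading ``$\gamma$ is forced to spend length $\ge\tfrac12\ell(\partial Y)$ near the boundary'' to ``there are at most $c(\eps,m)\,e^{L-\frac{1-\eps}{2}\ell(\partial Y)}$ such $\gamma$'' needs a quantitative hyperbolic-geometry input --- roughly, that a simple loop assembled from geodesic arcs of $\gamma$ and homotopic to a (possibly very long) boundary geodesic is trapped in a controlled neighbourhood and realizes boundedly many combinatorial types --- together with tight control of how $E$ and $\gamma\setminus E$ are glued at the self-intersection points of $\gamma$, all with constants independent of $g$ and $n$ individually. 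Note finally that the naive alternative of summing a crude per-configuration bound over all topological types of the pair $(Y,\gamma)$ fails outright, since a filling geodesic may have arbitrarily many self-intersections; the geodesics must be counted directly, which is why the geometric rigidity input, not any single combinatorial estimate, is the heart of the matter --- and why the statement carries a loss $\eps$ and a constant $c(\eps,m)\to\infty$ as $\eps\to 0$.
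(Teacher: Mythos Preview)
The paper does not prove this theorem. Theorem~\ref{wuxue counting} is quoted verbatim from Wu--Xue's earlier work and the paper simply refers the reader to \cite[Theorem~4]{wu2022random} and \cite[Theorem~18]{WX22pgt} for the proof. There is therefore no in-paper argument to compare your proposal against.

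On the substance of your sketch: the length bookkeeping in the first half is correct and recovers the standard lower bound $\ell(\gamma)\ge\tfrac12\ell(\partial Y)$ for a filling geodesic. The difficulty, as you yourself diagnose, is entirely in the counting step. Your plan to factor the count as ``(number of enclosing configurations~$E$) $\times$ (number of completions of~$E$)'' is not obviously implementable: the enclosing loops~$\delta_i$ can be much longer than~$\ell_i$, they share edges with the rest of~$\gamma$ in uncontrolled combinatorial patterns, and there is no evident reason why the number of such~$E$ should be bounded by a constant depending only on~$\eps$ and~$m$. The loss~$\eps$ in the actual statement does \emph{not} arise from ``corners'' or collar geometry in the way you speculate; in Wu--Xue's argument it comes from a different mechanism (roughly, a careful analysis of how the geodesic traverses a fixed pants decomposition of~$Y$, tracking the contribution of each pair of pants to the total length and exploiting hyperbolic trigonometry in each piece). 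Your decomposition into $E$ and $\gamma\setminus E$ does not align with that structure, and I do not see how to close the gap along the lines you propose without essentially re-deriving an entropy bound that already knows about~$\ell(\partial Y)$ --- which is the whole problem.

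In short: the paper offers nothing to compare with, and your proposal, while containing the right length inequality, leaves the main counting estimate as an acknowledged gap that would need a genuinely different idea to fill.
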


\subsection{The Weil-Petersson model of random surfaces}
For $2g-2+n\geq 1$, let $\T_{g,n}$ be the Teichm\"uller space of Riemann surfaces with $g$ genus and $n$ punctures.
The mapping class group  $\Mod_{g,n}$ acts on $\T_{g,n}$ keeping the Weil-Petersson symplectic form $\omega_{\mathrm{WP}}$ invariant. The quotient space  $\M_{g,n}=\T_{g,n}/\Mod_{g,n}$ is the moduli space of Riemann surfaces.  Set $\T_g=\T_{g,0}$ and  $\M_{g}=\M_{g,0}$. For  $L=(L_1,\cdots,L_n)\in \R^n_{\geq 0}$, let $\T_{g,n}(L)$ be the Teichm\"uller space of bordered hyperbolic surfaces with $n$ geodesic boundaries of length $L_1,\cdots,L_n$, and $\M_{g,n}(L)=\T_{g,n}(L)/\Mod_{g,n}$. In particularly, $\T_{g,n}(0,\cdots,0)=\T_{g,n}$.

The Fenchel-Nielsen coordinates corresponding to a pants decomposition $\{\alpha_i\}_{i=1}^{3g-3+n}$ of $X_{g,n}$ on the Teichm\"uller space $\T_{g,n}(L)$ are of the form  $X\mapsto (\ell_{\alpha_i}(X),\tau_{\alpha_i}(X))_{i=1}^{3g-3+n}$. Here $\tau_{\alpha_i}(X)$ is the  measured twist length along $\alpha_i$. Wolpert \cite{Wolpert82} shows that $\omega_{\mathrm{WP}}$  under the Fenchel-Nielsen coordinates is given by the following theorem: 

\begin{theorem}[(Wolpert)]\label{thm Wolpert}
	The Weil-Petersson symplectic form $\omega_{\mathrm{WP}}$ on $\T_{g,n}(L)$ is given by 
	$$\omega_{\mathrm{WP}} = \sum_{i=1}^{3g-3+n} d\ell_{\alpha_i} \wedge d\tau_{\alpha_i}.$$
\end{theorem}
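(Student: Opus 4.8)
The goal is to pin down the closed two-form $\omega_{\mathrm{WP}}$ on $\T_{g,n}(L)\cong\R_{>0}^{3g-3+n}\times\R^{3g-3+n}$, in the coordinates $(\ell_{\alpha_i},\tau_{\alpha_i})$, by playing the Fenchel--Nielsen twist flows against the length functions. The single most important ingredient is Wolpert's twist--length duality: if $t_\gamma$ denotes the infinitesimal Fenchel--Nielsen twist vector field along a simple closed geodesic $\gamma$, normalized so that it coincides with $\partial/\partial\tau_{\alpha_i}$ when $\gamma$ is a coordinate curve $\alpha_i$, then
\[
\iota_{t_\gamma}\,\omega_{\mathrm{WP}}=-\,d\ell_\gamma .
\]
I would establish this first, as it is the analytic core of the theorem. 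One route is deformation-theoretic: the twist deformation along $\gamma$ is represented by an explicit Beltrami differential which is, up to an inessential term, dual via the hyperbolic metric to the relative Poincar\'e series quadratic differential $\Theta_\gamma$; contracting the Weil--Petersson form against it and invoking Gardiner's variational formula for geodesic length (in which $d\ell_\gamma$ is represented by $\Theta_\gamma$) returns exactly $-d\ell_\gamma$. Alternatively one argues geometrically via Wolpert's cosine formula $\partial\ell_\beta/\partial\tau_\gamma=\sum_{p\in\gamma\cap\beta}\cos\theta_p$ for the variation of one length function under twisting along another.

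Granting the duality, the formula is extracted as follows. Fix the pants decomposition $\{\alpha_i\}_{i=1}^{3g-3+n}$. Since $(\ell_{\alpha_i},\tau_{\alpha_i})$ is a global coordinate system, the twist flow along $\alpha_i$ is the translation $\tau_{\alpha_i}\mapsto\tau_{\alpha_i}+t$ fixing all other coordinates, so $t_{\alpha_i}=\partial/\partial\tau_{\alpha_i}$; in particular every $\ell_{\alpha_j}$ is independent of every $\tau_{\alpha_i}$ (this also drops out of the cosine formula since the $\alpha_i$ are disjoint). Evaluating the duality identity on the coordinate fields gives $\omega_{\mathrm{WP}}(\partial_{\tau_{\alpha_i}},\partial_{\tau_{\alpha_j}})=-\partial\ell_{\alpha_i}/\partial\tau_{\alpha_j}=0$ and $\omega_{\mathrm{WP}}(\partial_{\tau_{\alpha_i}},\partial_{\ell_{\alpha_j}})=-\partial\ell_{\alpha_i}/\partial\ell_{\alpha_j}=-\delta_{ij}$. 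Equivalently $\iota_{\partial_{\tau_{\alpha_i}}}\bigl(\omega_{\mathrm{WP}}-\sum_k d\ell_{\alpha_k}\wedge d\tau_{\alpha_k}\bigr)=0$ for every $i$, so that $\mu:=\omega_{\mathrm{WP}}-\sum_k d\ell_{\alpha_k}\wedge d\tau_{\alpha_k}$ involves none of the $d\tau_{\alpha_i}$, i.e. it is a combination of the two-forms $d\ell_{\alpha_i}\wedge d\ell_{\alpha_j}$ only; and since $\omega_{\mathrm{WP}}$ is closed, being the K\"ahler form of the Weil--Petersson metric, $\mu$ is a closed two-form whose coefficients are functions of the $\ell_{\alpha_i}$ alone.

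The remaining point, and the one I expect to be the genuine obstacle, is that $\mu=0$. This is automatic when $3g-3+n=1$ (a one-holed torus or four-holed sphere), since there are no $d\ell\wedge d\ell$ two-forms in dimension two, and there the duality identity already forces $\omega_{\mathrm{WP}}=d\ell_{\alpha_1}\wedge d\tau_{\alpha_1}$. In general I would cut along all but one curve of $\{\alpha_i\}$, exhibiting $\T_{g,n}(L)$ as foliated by copies of these elementary Teichm\"uller spaces on which the formula is known, and then propagate the vanishing of $\mu$ using its closedness, controlling the mixed coefficients $\omega_{\mathrm{WP}}(\partial_{\ell_{\alpha_i}},\partial_{\ell_{\alpha_j}})$ either by comparing with the analogous expansion for a second pants decomposition related to $\{\alpha_i\}$ by an elementary move (together with Wolpert's derivative formula for the change of a twist coordinate under twisting along a transverse curve), or --- as in Wolpert's original treatment --- by invoking the smooth extension of $\omega_{\mathrm{WP}}$ across the Deligne--Mumford boundary and its expansion in plumbing coordinates, in which such mixed terms are absent, combined with the twist-invariance of $\mu$. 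This gluing/degeneration bookkeeping, rather than the local computation or the duality identity taken in isolation, is where the real care is needed.
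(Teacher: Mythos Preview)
The paper does not prove this theorem at all: it is quoted as a background result and attributed to Wolpert \cite{Wolpert82}, with no argument given. So there is no ``paper's own proof'' to compare your proposal against. Your sketch is a reasonable outline of Wolpert's original approach (twist--length duality plus the identification of the remaining $d\ell\wedge d\ell$ terms), but for the purposes of this paper a citation suffices.
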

The Weil-Petersson volume form is given by 
$$d\Vol\nolimits_{\mathrm{WP}}=\tfrac{1}{(3g-3+n)!}\underbrace{\omega_{\mathrm{WP}}\wedge\cdots\wedge\omega_{\mathrm{WP}}}_{3g-3+n\ \text{copies}}.$$
This is a measure on $\T_{g,n}(L)$ and invariant under $\Mod_{g,n}$. Then it is also a measure on $\M_{g,n}(L)$, which is still denoted by $d\Vol_{\mathrm{WP}}$ or $dX$ for short. Denote $V_{g,n}(L)$ to be the total volume of $\M_{g,n}(L)$ under the Weil-Petersson metric. $V_{g,n}(L)$ is a polynomial on $L$  by \cite{Mirz07}. Set $V_{g,n}=V_{g,n}(0)$ for short. Now we consider the closed case $\sM_g$.  The Weil-Petersson metric induces the Weil-Petersson probability measure $\Prob$ on $\M_g$ by:
$$\Prob(\mathcal{A}):=\frac{1}{V_{g}}\int_{\M_{g}} \mathbf{1}_{\mathcal{A}} dX$$
where $\mathcal{A}\sbs\M_g$ is a Borel subset and $\mathbf{1}_{\mathcal{A}}$ is its characteristic function. The  expectation  of  a random variable
$f$ is given by $$
\E\left[ f\right]:=\frac{1}{V_g}\int_{\M_g}fdX.
$$
The following integration formula is given by Mirzakhani in \cite{mirzakhani2013growth} which is very useful in the study of Weil-Petersson random hyperbolic surfaces.
\begin{theorem}[(Mirzakhani)]\label{MIF}
    For any multi-curve $\gamma=\cup_{i=1}^k\gamma_i$, there  exists a constant $C_\gamma\in(0,1]$ such that for
 any $F:\mathbb{R}^k\to \mathbb{R}_+,$   $$\begin{aligned}
     &\int_{\sM_{g}}\sum_{\alpha=(\alpha_1,\cdots,\alpha_k)\in\Mod_{g}\cdot \gamma}F\Big(\ell_{\alpha_1}(X),\cdots,\ell_{\alpha_k}(X)\Big)dX\\
=&C_\gamma\int_{x\in\mathbb{R}_+^k}F(x)\Vol(\sM(S_{g}(\gamma)),\ell_{\gamma_i}=x_i)x\cdot dx. \end{aligned}
 $$
Here  $\gamma_i,i=1,\cdots,k$ are mutually disjoint simple closed geodesics,  $x\cdot dx$ represents $x_1\cdots x_kdx_1\cdots dx_k$ and $\Vol(
\sM(S_{g}(\gamma)),\ell_{\gamma_i}=x_i)
$  is the Weil-Petersson volume of the moduli space of Riemann surfaces homotopic to $S_g\setminus \gamma$ with given boundary lengths.
\end{theorem}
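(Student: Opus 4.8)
The plan is to follow Mirzakhani's unfolding argument. Fix a multi-curve $\gamma=\cup_{i=1}^k\gamma_i$ with the $\gamma_i$ mutually disjoint and simple, and introduce the intermediate moduli space $\sM_g^{\gamma}=\T_g/\mathrm{Stab}(\gamma)$, where $\mathrm{Stab}(\gamma)\leq\Mod_g$ is the stabilizer of the isotopy class of the ordered multi-curve; a point of $\sM_g^\gamma$ is a pair $(X,\alpha)$ with $X\in\sM_g$ and $\alpha$ in the $\Mod_g$-orbit of $\gamma$. The forgetful map $\pi:\sM_g^\gamma\to\sM_g$ is a covering of orbifolds whose fiber over $X$ is precisely the $\Mod_g$-orbit of $\gamma$ realized on $X$, and since each $\ell_{\alpha_i}$ descends to a well-defined function on $\sM_g^\gamma$ and $\pi$ is a local isometry for the Weil--Petersson metric, one gets the unfolding identity
\[\int_{\sM_g}\sum_{\alpha\in\Mod_g\cdot\gamma}F\big(\ell_{\alpha_1}(X),\dots,\ell_{\alpha_k}(X)\big)\,dX=\int_{\sM_g^\gamma}F\big(\ell_{\gamma_1},\dots,\ell_{\gamma_k}\big)\,d\Vol_{\mathrm{WP}}.\]

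Next I would understand the right-hand side geometrically via cutting. Cutting $X$ along the geodesic representatives of $\gamma$ yields a (possibly disconnected) bordered surface homeomorphic to $S_g\setminus\gamma$, with $2k$ boundary geodesics paired two-to-a-curve of equal length; recording lengths and Fenchel--Nielsen twists gives a map
\[\sM_g^\gamma\longrightarrow \R_{>0}^k\times\prod_{i=1}^k\big(\R/\ell_{\gamma_i}\Z\big),\qquad (X,\alpha)\mapsto\big((\ell_{\gamma_i}(X))_i,(\tau_{\gamma_i}(X))_i\big),\]
whose fiber over $(x,\tau)$ is the moduli space $\sM(S_g(\gamma))$ of hyperbolic structures on the cut surface with boundary lengths $\ell_{\gamma_i}=x_i$, up to a finite group of symmetries of the cut surface (isometries exchanging the two copies of some $\gamma_i$, or permuting isometric components). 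Completing $\gamma$ to a pants decomposition of $S_g$ and applying Wolpert's formula (Theorem \ref{thm Wolpert}), the Weil--Petersson symplectic form splits as $\sum_{i=1}^k d\ell_{\gamma_i}\wedge d\tau_{\gamma_i}$ plus the Weil--Petersson form of $S_g(\gamma)$ in the remaining coordinates, so the top exterior power factorizes and
\[d\Vol_{\mathrm{WP}}=d\ell_{\gamma_1}\,d\tau_{\gamma_1}\cdots d\ell_{\gamma_k}\,d\tau_{\gamma_k}\wedge d\Vol_{\mathrm{WP}}\big(\sM(S_g(\gamma))\big).\]

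Finally I would integrate with Fubini: integrating over the fibers produces $\Vol\big(\sM(S_g(\gamma)),\ell_{\gamma_i}=x_i\big)$, integrating each twist $\tau_{\gamma_i}$ over one full period $[0,\ell_{\gamma_i})=[0,x_i)$ contributes the Jacobian factor $x_i$, yielding the asserted $x\cdot dx$, and the finite overcounting coming from the symmetry group of the cut surface is exactly what is absorbed into the constant $C_\gamma\in(0,1]$ (e.g.\ $C_\gamma=\tfrac12$ when a component of $S_g\setminus\gamma$ admits an isometry swapping the two sides of a $\gamma_i$, and $C_\gamma=1$ generically). The step I expect to be most delicate is this last piece of bookkeeping: making the fibration statement precise at the orbifold level, tracking which Dehn twists and which additional finite symmetries have already been quotiented out in passing $\T_g\to\sM_g^\gamma\to$ fibers, and thereby pinning down $C_\gamma$; the analytic content (Wolpert's formula together with Fubini) is comparatively routine once this combinatorial set-up is in place.
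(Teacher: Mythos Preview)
The paper does not prove Theorem~\ref{MIF}; it is quoted as Mirzakhani's result with a reference to \cite{mirzakhani2013growth}, so there is no ``paper's own proof'' to compare against. Your outline is essentially Mirzakhani's original argument: pass to the intermediate cover $\sM_g^\gamma=\T_g/\mathrm{Stab}(\gamma)$, use Wolpert's formula to factor the Weil--Petersson volume as the product of $d\ell_{\gamma_i}\,d\tau_{\gamma_i}$ with the volume on the cut surface, and integrate out the twists to produce the $x_i$ factors, with $C_\gamma$ recording the finite symmetry overcount. This is correct in spirit and you have identified the genuinely delicate point (the orbifold bookkeeping for the stabilizer and the precise value of $C_\gamma$); for the purposes of this paper nothing more than the citation is needed, since only the inequality $C_\gamma\leq 1$ is ever used.
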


\noindent In this paper we study the asymptotic behaviors of the regularized determinants of Laplacians on Weil-Petersson random surfaces. Indeed,
recently the study of spectral theory of Weil-Petersson random surfaces is quite active. One may see \cite{mirzakhani2013growth, MS20, GMST21, Monk22, wu2022random, LW21, AM23, Hide21, Rud22, RW22, WX22pgt, SW22, HT22, naud2023determinants, Gong23, MS23} and references therein for recent developments on related topics.

\subsection{Bounds on Weil-Petersson volumes}
In this subsection we recall several well-known bounds on Weil-Petersson volumes which will be applied later. It is known by \cite[Lemma 22]{nie2023large} or \cite[Proposition 3.1]{MP19} that
\be\label{b-v-1}
\left(1- c(n) \frac{t^2}{g}\right)\cdot \left(\frac{\sinh(t/2)}{t/2} \right)^n\leq \frac{V_{g,n}(t,\cdots,t)}{V_{g,n}}\leq \left(\frac{\sinh(t/2)}{t/2} \right)^n\leq e^{\frac{nt}{2}},
\ene
where $c(n)>0$ is a constant only depending on $n$. It is also known by \cite[Part (3) of Theorem 3.5]{mirzakhani2013growth} that
\[\frac{V_{g-1,2}}{V_g}=1+O\left(\frac{1}{g}\right).\]
And by \cite[Lemma 3.3]{mirzakhani2013growth} we know that
\[\frac{\sum\limits_{i=1}^{\left[\frac{g}{2}\right]}V_{i,1}\cdot V_{g-i,1}}{V_g}\asymp \frac{1}{g}.\]
These two equations above give that
\be\label{b-v-2}
\frac{V_{g-1,2}+\sum\limits_{i=1}^{\left[\frac{g}{2}\right]}V_{i,1}\cdot V_{g-i,1}}{V_g}\asymp 1.
\ene

\subsection{Expectation of $\E\left[ 
  \left( \frac{1}{\sys(X)} \right)^\beta\right]$ }
Denote by $\sys(X)$ the length of a shortest closed geodesic on $X$ which is always simple.
Similar to the proof of \cite[Corollary 4.3]{mirzakhani2013growth}, we prove the following result which will be used in the proof of Theorem \ref{mainthm1}.
\begin{proposition}\label{sysbetacritical}
 For any $\beta>0$, we have
\be
\E\left[ 
  \left( \frac{1}{\sys(X)} \right)^\beta\right] \ \asymp \ \nonumber
\begin{cases}
1,\ &\ \textit{\emph{if}}\  0<\beta<2; \\
+\infty, \ &\ \textit{\emph{if}}\  2\leq \beta. 
\end{cases}
\ene
\end{proposition}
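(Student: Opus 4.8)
The plan is to estimate $\E\left[\left(\frac{1}{\sys(X)}\right)^\beta\right]$ by decomposing according to the topological type of a shortest geodesic and applying Mirzakhani's integration formula (Theorem \ref{MIF}). First I would handle the easy direction: the uniform lower bound $\E\left[\left(\frac{1}{\sys(X)}\right)^\beta\right] \succ 1$ follows for all $\beta > 0$ simply because $\sys(X) \leq 2\arcsinh(1) =: C_0$ on every $X \in \M_g$ (a standard systole bound for closed hyperbolic surfaces), so the integrand is bounded below by $C_0^{-\beta}$ pointwise. For the upper bound when $\beta \in (0,2)$ and the divergence when $\beta \geq 2$, the idea is to write
\[
\E\left[\left(\frac{1}{\sys(X)}\right)^\beta\right] = \frac{1}{V_g}\int_{\M_g} \frac{dX}{\sys(X)^\beta},
\]
and bound $\sys(X)^{-\beta}$ from above and below in terms of $\sum_{\gamma} \mathbf{1}_{\{\ell_\gamma(X) \leq \varepsilon_0\}} \ell_\gamma(X)^{-\beta}$, where the sum runs over all simple closed geodesics and $\varepsilon_0$ is a small fixed constant (the collar lemma ensures at most one such short geodesic below the Margulis constant, which makes the comparison between $\sys(X)^{-\beta}$ and this sum two-sided up to constants and an $O(1)$ error term from surfaces with systole $\geq \varepsilon_0$, which contribute $\asymp 1$).

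The main computation is then to estimate, for each topological type of simple closed curve $\gamma$ — either non-separating, or separating into pieces of genus $i$ and $g-i$ — the quantity
\[
\frac{1}{V_g}\int_{\M_g} \sum_{\alpha \in \Mod_g \cdot \gamma} \mathbf{1}_{\{\ell_\alpha(X) \leq \varepsilon_0\}} \ell_\alpha(X)^{-\beta} \, dX = \frac{C_\gamma}{V_g}\int_0^{\varepsilon_0} x^{-\beta} \Vol(\M(S_g(\gamma)), \ell = x) \cdot x \, dx.
\]
For the non-separating case this is $\asymp \frac{V_{g-1,2}}{V_g}\int_0^{\varepsilon_0} x^{1-\beta}\,dx$, using the volume bound \eqref{b-v-1} to replace $V_{g-1,2}(x,x)$ by $V_{g-1,2}$ up to bounded factors and \eqref{b-v-2} to see $\frac{V_{g-1,2}}{V_g} \asymp 1$; the integral $\int_0^{\varepsilon_0} x^{1-\beta}\,dx$ converges iff $\beta < 2$ and diverges iff $\beta \geq 2$, which is exactly the claimed dichotomy. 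For the separating case one gets a sum over $i$ of $\frac{V_{i,1}V_{g-i,1}}{V_g}\int_0^{\varepsilon_0}x^{1-\beta}\,dx$, and by \eqref{b-v-2} the prefactor $\sum_i \frac{V_{i,1}V_{g-i,1}}{V_g} \asymp 1$, so this contributes at the same order and does not affect the dichotomy. Summing over the finitely many topological types (non-separating plus $\lfloor g/2 \rfloor$ separating types, but the separating volumes are collected by \eqref{b-v-2}) yields the result.

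The main obstacle I anticipate is making the reduction from $\sys(X)^{-\beta}$ to the sum over short simple closed geodesics fully rigorous with uniform-in-$g$ constants: one must argue that (a) below the Margulis/collar threshold there is a unique shortest geodesic, so $\sys(X)^{-\beta} = \ell_\alpha(X)^{-\beta}$ for that unique $\alpha$ and the sum has exactly one term; (b) the contribution of $\{X : \sys(X) \geq \varepsilon_0\}$ is $\asymp 1$ uniformly — the upper bound is trivial since $\sys^{-\beta} \leq \varepsilon_0^{-\beta}$ there, and the lower bound again uses $\sys \leq C_0$; and (c) the $x^{-\beta}$ singularity is integrable against the $x\,dx$ Jacobian precisely when $\beta < 2$. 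Also one should double-check that for the divergence claim when $\beta \geq 2$ it suffices to exhibit one topological type (say non-separating) for which the integral is already $+\infty$, since all terms are non-negative; then by \eqref{b-v-2} this forces $\E\left[\left(\frac{1}{\sys(X)}\right)^\beta\right] = +\infty$. This is precisely the structure of the proof of \cite[Corollary 4.3]{mirzakhani2013growth}, adapted to track the exponent $\beta$.
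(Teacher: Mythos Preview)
Your upper bound for $\beta\in(0,2)$ and your divergence argument for $\beta\geq 2$ follow essentially the same route as the paper: Mirzakhani's integration formula, the volume bounds \eqref{b-v-1} and \eqref{b-v-2}, and the (non)integrability of $x^{1-\beta}$ near $0$. However, your lower bound argument has a genuine error.

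The claim that $\sys(X)\leq 2\arcsinh(1)$ for every $X\in\M_g$ is false: there is no upper bound on the systole that is uniform in $g$. For fixed $g$ one has an upper bound of order $\log g$, and surfaces attaining systole of order $\log g$ are known to exist (e.g.\ certain arithmetic surfaces). Hence the pointwise inequality $(1/\sys(X))^\beta\geq C_0^{-\beta}$ is unavailable, and even a correct genus-dependent bound would only yield $\E[(1/\sys)^\beta]\succ(\log g)^{-\beta}\to 0$, not $\succ 1$. The paper gets the uniform lower bound by a different idea: by \cite[Theorem~4.2]{mirzakhani2013growth} one has $\Prob\bigl(\sys(X)<1\bigr)\asymp 1$ uniformly in $g$, and on that event $(1/\sys(X))^\beta>1$, so
\[
\E\!\left[\left(\tfrac{1}{\sys(X)}\right)^\beta\right]\;\geq\;\Prob\bigl(\sys(X)<1\bigr)\;\asymp\;1.
\]

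A smaller gap: your assertion that below the Margulis threshold ``the sum has exactly one term'' is also incorrect---one can pinch several disjoint curves simultaneously, so there may be many geodesics of length $\leq\varepsilon_0$. For the divergence direction you need instead the inequality $\sys(X)^{-\beta}\geq\frac{1}{3g-3}\sum_{\ell_\gamma\leq 1}\ell_\gamma(X)^{-\beta}$, which holds because each summand is at most $\sys(X)^{-\beta}$ and the Collar Lemma bounds the number of terms by $3g-3$; this is exactly how the paper proceeds.
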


\begin{proof}
For $\beta>0$, we set $f:\M_g\to \mathbb{R}_{\geq 0}$ as $$
 f(X)=\sum_{\ell_{\gamma}(X)\leq 1}\frac{1}{\ell_\gamma(X)^\beta}.
 $$
We firstly consider the case \underline{$0<\beta<2$}. For the lower bound, it follows from \cite[Theorem 4.2]{mirzakhani2013growth} that
\be
\begin{aligned}
\E\left[ 
  \left( \frac{1}{\sys(X)} \right)^\beta\right]&\geq \frac{\Vol_{\mathrm{WP}}(\sM_g^{<1})}{V_g}\\
&\asymp 1, 
\end{aligned}
\ene
where $\sM_g^{<1}=\{X\in \sM_g; \ \sys(X)<1\}$. For the upper bound, we cut $X$ along a systolic geodesic $\gamma$. Then $X\setminus \gamma$ is homotopic to either $S_{g-1,2}$ or $S_{i,1}\cup S_{g-i,1}$ for $1\leq i\leq \left[\frac{g}{2}\right]$. Thus, it follows from Mirzakhani's integration formula, \ie, Theorem \ref{MIF} that
\beqar
&&\E\left[\left(\frac{1}{\sys(X)}\right)^\beta\right]\leq 1+\frac{1}{V_g}\int_{\M_g}f(X)dX\\
&&\prec 1+\frac{1}{V_g}\int_{0}^1\left(V_{g-1,2}(t,t) +\sum_{i=1}^{\left[\frac{g}{2}\right]}V_{i,1}(t)\cdot V_{g-i,1}(t)  \right)\cdot t^{1-\beta}dt.
\eeqar
This together with \eqref{b-v-1} and \eqref{b-v-2} implies that for $0<\beta<2$,
\be
\begin{aligned}
\E\left[\left(\frac{1}{\sys(X)}\right)^\beta\right]&\prec  1+\frac{V_{g-1,2}+\sum_{i=1}^{\left[\frac{g}{2}\right]}V_{i,1}\cdot V_{g-i,1}}{V_g}\int_{0}^1e^t t^{1-\beta}dt\\
&\asymp 1. 
\end{aligned}
\ene

\noindent Now we consider the case \underline{$\beta\geq 2$}. By the standard Collar Lemma (see \eg \cite{keen1974collars} or \cite{buser2010geometry}), there exist at most $3g-3$ different simple closed geodesics on $X\in\M_g$ of length $\leq 1$. Thus,  it follows from Mirzakhani's integration formula, \ie, Theorem \ref{MIF} that
\beqar
\int_{\M_g}\left(\frac{1}{\sys(X)}\right)^\beta dX &\geq& \frac{1}{3g-3}  \int_{\M_g} f(X)dX\\
&\geq & \frac{1}{3g-3} \int_0^1 t^{1-\beta} V_{g-1,2}(t,t)dt.
\eeqar
This together with the lower bound of \eqref{b-v-1} implies that for $\epsilon_0>0$ small enough,
\be
\begin{aligned}
\int_{\M_g}\left(\frac{1}{\sys(X)}\right)^\beta dX &\geq \frac{V_{g-1,2}}{3g-3} \int_0^{\epsilon_0} \left(\frac{\sinh(t/2)}{t} \right)^2\cdot t^{1-\beta}dt\\
&= +\infty.
\end{aligned}
\ene
The proof is complete.
\end{proof}

\section{Proof of Theorem \ref{mainthm1}}\label{s-mt1}
In this section we prove Theorem \ref{mainthm1}, assuming \eqref{e-Naud-1} of Naud. 

Firstly we need the following lemma which is proved in \cite{naud2023determinants} for $0<\beta<1$. 
\begin{lemma}\label{finite-C-beta}
 For any $0<\beta<2$, there exists a constant $C_\beta>0$ only depending on $\beta$ such that for any $g\geq 2,$
$$\E\left[\left|\frac{\log \det (\Delta_X)}{4\pi(g-1)}\right|^\beta\right]<C_\beta.
 $$  
\end{lemma}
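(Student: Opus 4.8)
The plan is to bound $\left|\log\det(\Delta_X)\right|$ pointwise in terms of quantities whose $\beta$-th moments are controlled, and then apply H\"older-type inequalities together with Proposition~\ref{sysbetacritical}. Starting from the identity \eqref{eq-def-logdetlaplace}, write $\log\det(\Delta_X) - 4\pi(g-1)E = \gamma_0 - \int_0^1 \frac{S_X(t)}{t}\,dt - \int_1^\infty \frac{S_X(t)-1}{t}\,dt$. The term $4\pi(g-1)E$ contributes $E^\beta$ after dividing by $(4\pi(g-1))^\beta$ and does not affect the finiteness; so it suffices to show $\E\left[\left|\frac{1}{4\pi(g-1)}\left(\int_0^1 \frac{S_X(t)}{t}\,dt + \int_1^\infty \frac{S_X(t)-1}{t}\,dt\right)\right|^\beta\right]$ is uniformly bounded. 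Here $S_X(t)\geq 0$ always, so the first integral is nonnegative and we only need an upper bound on it; for the tail integral $\int_1^\infty \frac{S_X(t)-1}{t}\,dt$ one uses Selberg's trace formula \eqref{sel-trace-formula} to rewrite $S_X(t)-1 = \sum_{j\geq 1}e^{-t\lambda_j} - \frac{4\pi(g-1)e^{-t/4}}{4\pi}\int_{-\infty}^\infty r e^{-tr^2}\tanh(\pi r)\,dr$, whose contributions are $O(g)$ uniformly (the first sum is bounded by a heat-trace estimate using Weyl's law, the second is an explicit $O(g)$ quantity), so that term divided by $4\pi(g-1)$ is $O(1)$ and contributes a bounded constant to the $\beta$-th moment.

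The substantive point is the short-time integral $\int_0^1 \frac{S_X(t)}{t}\,dt$, which can blow up when $X$ has a short geodesic. Following Naud, the strategy is to split the sum defining $S_X(t)$ according to whether a primitive geodesic $\gamma$ is short (say $\ell(\gamma) < 2\arcsinh 1$, equivalently in the thin part) or long. For the long part, Lemma~\ref{counting in buser} gives at most $(g-1)e^{L+6}$ oriented closed geodesics of length $\le L$ that are not iterates of short ones, and a dyadic decomposition in the length together with the Gaussian factor $e^{-(k\ell(\gamma))^2/4t}$ shows this contribution to $\int_0^1\frac{S_X(t)}{t}\,dt$ is $O(g)$ uniformly over $\sM_g$; dividing by $4\pi(g-1)$ gives an $O(1)$ contribution. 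For the thin part, there are at most $3g-3$ short simple geodesics, and for each such $\gamma$ with $\ell(\gamma)=\ell$ small, the sum over iterates $k\ge 1$ of $\frac{\ell}{2\sinh(k\ell/2)}e^{-(k\ell)^2/4t}$ integrated against $\frac{e^{-t/4}}{(4\pi t)^{1/2}t}$ over $t\in(0,1)$ is, up to universal constants, bounded by something like $\frac{1}{\ell} \cdot (\text{bounded})$ — one should check that $\int_0^1 \frac{e^{-t/4}}{(4\pi t)^{1/2}}\frac{\ell}{2\sinh(\ell/2)}e^{-\ell^2/4t}\frac{dt}{t} \prec \frac{1}{\ell}$ for small $\ell$, plus a geometrically decaying sum in $k$. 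Hence $\int_0^1 \frac{S_X(t)}{t}\,dt \prec g + \sum_{\ell_\gamma(X) < 2\arcsinh 1}\frac{1}{\ell_\gamma(X)} \prec g + \frac{g}{\sys(X)}$ (the last step using that all short geodesics have length $\ge \sys(X)$, or more carefully keeping the sum). Therefore
\[
\left|\frac{\log\det(\Delta_X)}{4\pi(g-1)}\right| \prec 1 + \frac{1}{\sys(X)}
\]
pointwise on $\sM_g$ with a universal implied constant.

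Given this pointwise bound, Lemma~\ref{finite-C-beta} follows immediately: for $0<\beta<2$,
\[
\E\left[\left|\frac{\log\det(\Delta_X)}{4\pi(g-1)}\right|^\beta\right] \prec \E\left[\left(1+\frac{1}{\sys(X)}\right)^\beta\right] \prec 1 + \E\left[\left(\frac{1}{\sys(X)}\right)^\beta\right] \prec 1,
\]
using the elementary inequality $(a+b)^\beta \le 2^\beta(a^\beta+b^\beta)$ and Proposition~\ref{sysbetacritical}, which gives $\E[(1/\sys(X))^\beta]\asymp 1$ precisely when $\beta\in(0,2)$. The constant $C_\beta$ depends only on $\beta$ through these inequalities. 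I expect the main obstacle to be the careful estimation of the short-time integral $\int_0^1 \frac{S_X(t)}{t}\,dt$ — in particular extracting the clean $\frac{1}{\sys(X)}$-type bound for the thin-part contribution uniformly in $g$, and making sure the long-part counting argument with the dyadic/Gaussian interplay genuinely produces $O(g)$ rather than $O(g^{1+o(1)})$; this is exactly the kind of estimate done in \cite{naud2023determinants} for the range $0<\beta<1$, and the point here is that the same pointwise bound already suffices for the wider range $\beta<2$ once Proposition~\ref{sysbetacritical} is in hand.
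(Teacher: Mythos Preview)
Your reduction to a pointwise bound plus Proposition~\ref{sysbetacritical} is the right shape, and your treatment of the short-time integral $\int_0^1 \frac{S_X(t)}{t}\,dt$ is essentially correct: splitting into thin and thick geodesics and using Lemma~\ref{counting in buser} does give a bound of the form $O\big(g + g/\sys(X)\big)$. The problem is your claim about the tail integral $\int_1^\infty \frac{S_X(t)-1}{t}\,dt$. After applying \eqref{sel-trace-formula} you assert that $\int_1^\infty \frac{1}{t}\sum_{j\geq 1} e^{-t\lambda_j}\,dt$ is $O(g)$ uniformly on $\sM_g$ ``by a heat-trace estimate using Weyl's law''. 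This is false: the integral equals $\sum_{j\geq 1} E_1(\lambda_j)$ with $E_1(\lambda)=\int_\lambda^\infty \frac{e^{-s}}{s}\,ds$, and $E_1(\lambda)\sim |\log\lambda|$ as $\lambda\to 0^+$. Weyl's law says nothing about small eigenvalues, and there can be up to $2g-2$ eigenvalues below $1/4$, each contributing $\asymp |\log\lambda_j|$. So the tail integral is only $O\big(g(1+|\log\lambda_*(X)|)\big)$ with $\lambda_*(X)=\min\{\lambda_1(X),\tfrac14\}$, and after dividing by $4\pi(g-1)$ you are left with a term $|\log\lambda_*(X)|$ that is \emph{not} dominated by $1+1/\sys(X)$ with a universal constant. (Concretely, a genus-$g$ surface built as a long chain of pairs of pants with all cuff lengths $\asymp 1$ has $\sys(X)\asymp 1$ but many small eigenvalues, so the tail contribution divided by $g$ grows like $\log g$.)

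This is exactly why the paper does not attempt your cleaner pointwise bound. Instead it quotes from \cite{naud2023determinants} the inequality
\[
\frac{|\log\det(\Delta_X)|}{4\pi(g-1)}\prec 1+|\log\lambda_*(X)|+\frac{1}{\sys(X)}+\frac{\log^+\sys^{-1}(X)}{\sys(X)}\cdot\frac{N_X^0(1)}{4\pi(g-1)},
\]
keeps the $|\log\lambda_*(X)|$ term explicitly, and then invokes the separate estimate $\E\big[|\log\lambda_*(X)|^\beta\big]\prec 1$ (also from \cite{naud2023determinants}) to handle its $\beta$-th moment. The remaining terms are controlled via Proposition~\ref{sysbetacritical} and the Collar Lemma bound $N_X^0(1)\leq 3g-3$, just as you do. So your argument can be repaired, but only by reinstating the $|\log\lambda_*|$ term and appealing to that additional spectral-gap moment estimate; it cannot be absorbed into the systole term.
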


\begin{proof}
We closely follow the proof of \cite[Theorem 5.1]{naud2023determinants}. First recall that the following inequality is proved in the proof of \cite[Theorem 5.1]{naud2023determinants} (see Page 284 of \cite{naud2023determinants}):
\[\frac{|\log \det(\Delta_X)|}{4\pi(g-1)}\prec 1+|\log (\lambda_*(X))|+\frac{1}{\sys(X)}+\left(\frac{\log^+\sys^{-1}(X)}{\sys(X)}\frac{N_X^0(1)}{4\pi(g-1)}\right),\]
where  $\lambda_*(X)=\min\{\lambda_1(X),\frac{1}{4}\}$, $\log^+(x)=\max\{\log (x),0\}$  and  $N_X^0(1)$ is the number of primitive closed geodesics on $X$ of length $\leq 1$. For $\beta >0$, by using the elementary inequality that $(a+b)^\beta \leq 2^\beta\cdot (a^\beta+b^\beta) $ for $a,b\geq 0$, we have
\be
\begin{aligned}
&\frac{|\log \det(\Delta_X)|^\beta}{(4\pi(g-1))^\beta}\prec 1+|\log (\lambda_*(X))|^\beta\\
&+\left(\frac{1}{\sys(X)}\right)^\beta+\left(\frac{\log^+\sys^{-1}(X)}{\sys(X)}\frac{N_X^0(1)}{4\pi(g-1)}\right)^\beta.
\end{aligned}
\ene
It is shown in the proof of \cite[Theorem 5.1]{naud2023determinants} (see Page 286 of \cite{naud2023determinants}) that for all $\beta>0$,
\be
\E[ |\log(\lambda_*(X))|^\beta]\prec 1.
\ene
It is known by Proposition \ref{sysbetacritical} that for $0<\beta <2$,
\be
\E\left[ 
  \left( \frac{1}{\sys(X)} \right)^\beta\right] \asymp 1.
\ene
By the standard Collar Lemma (see \eg \cite{keen1974collars} or \cite{buser2010geometry}) we know that
\[ N_X^0(1)\leq 3g-3.\]
Thus, we have
\be
\begin{aligned}
&\E\left[\left(\frac{\log^+\sys^{-1}(X)}{\sys(X)}\frac{N_X^0(1)}{4\pi(g-1)}\right)^\beta \right]\\
 &\prec \E\left[ \left(\frac{\log^+\sys^{-1}(X)}{\sys(X)}\right)^\beta\right]\\
&\prec \E\left[ 
  \left( \frac{1}{\sys(X)} \right)^{\frac{\beta+2}{2}}\right] \\
&\asymp 1,
\end{aligned}
\ene
where we apply Proposition \ref{sysbetacritical} in the last equation. Then the conclusion follows from all the equations above.
\end{proof}

Recall that
\begin{equation}\label{eq-def-logdetlaplace-recall}
\log \det\Delta_X=4\pi E (g-1)+\gamma_0-\int_{0}^1\frac{S_X(t)}{t}dt-\int_1^\infty\frac{S_X(t)-1}{t}dt,
\end{equation}
where $\gamma_0$ is the Euler constant, and $$S_X(t)=\frac{e^{-\frac{t}{4}}}{(4\pi t)^\frac{1}{2}}\sum_{k\geq 1}\sum_{\gamma\in \mathcal{P}} \frac{\ell(\gamma)}{2\sinh\left(\frac{k\ell(\gamma)}{2}\right)}e^{-\frac{(k\ell(\gamma))^2}{4t}}.$$ Here $\mathcal{P}$ is the set of all oriented primitive closed geodesics on $X$. The following lemma gives a lower bound of $\left|  \log\det(\Delta_X) \right|$ when $X$ goes to the boundary of $\M_g$.  One may see \cite{Wol87} for an asymptotic form.
\begin{lemma}\label{lowbound}
There exist two uniform constants $D_1, D_2>0$ such that 
$$
-\log \det (\Delta_X)\geq D_1\cdot \left( \sum\limits_{\ell(\gamma)\leq 1}\frac{1}{\ell(\gamma)}\right)-4\pi(D_2+E)(g-1)-\gamma_0.
$$    
\end{lemma}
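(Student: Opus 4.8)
The plan is to extract a lower bound on $-\log\det(\Delta_X)$ directly from the identity \eqref{eq-def-logdetlaplace} by controlling the two integrals of $S_X(t)$. Rearranging \eqref{eq-def-logdetlaplace} gives
\[
-\log\det(\Delta_X)=\int_0^1\frac{S_X(t)}{t}\,dt+\int_1^\infty\frac{S_X(t)-1}{t}\,dt-4\pi E(g-1)-\gamma_0.
\]
Since $S_X(t)\ge 0$ (it is a sum of positive terms), we have $\int_1^\infty\frac{S_X(t)-1}{t}\,dt\ge -\int_1^\infty\frac{1}{t}\,dt$, but that diverges, so instead I would use the Selberg trace formula \eqref{sel-trace-formula}: since $\sum_{j\ge 0}e^{-t\lambda_j}\ge 1$ and the Plancherel term is positive, we get $S_X(t)\le \sum_j e^{-t\lambda_j}$, and more usefully for the tail, $S_X(t)-1 \ge -\,\frac{4\pi(g-1)e^{-t/4}}{4\pi}\int_{-\infty}^\infty r e^{-tr^2}\tanh(\pi r)\,dr$ is a crude route; cleaner is to just bound $\int_1^\infty\frac{S_X(t)-1}{t}\,dt\ge -\int_1^\infty\frac{1}{t}\big(1-S_X(t)\big)^+\!dt$ and note the Plancherel integral $\int_1^\infty \frac{e^{-t/4}}{t}(g-1)\big(\int_{-\infty}^\infty r e^{-tr^2}\tanh(\pi r)\,dr\big)dt$ converges to a uniform multiple of $(g-1)$; combined with $\sum_j e^{-t\lambda_j}\ge 1+e^{-t\lambda_1(X)}\ge 1$, this shows $\int_1^\infty\frac{S_X(t)-1}{t}\,dt\ge -D_2'(g-1)$ for a universal $D_2'$. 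So the whole tail plus the $-4\pi E(g-1)-\gamma_0$ terms contribute at least $-4\pi(D_2+E)(g-1)-\gamma_0$ for suitable $D_2$.

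The main point is then the near-zero integral $\int_0^1\frac{S_X(t)}{t}\,dt$, which I want to bound below by $D_1\sum_{\ell(\gamma)\le 1}\ell(\gamma)^{-1}$. Restricting to the $k=1$ summand and to primitive geodesics $\gamma$ with $\ell(\gamma)\le 1$, and using positivity to drop everything else,
\[
S_X(t)\ge \frac{e^{-t/4}}{(4\pi t)^{1/2}}\sum_{\ell(\gamma)\le 1}\frac{\ell(\gamma)}{2\sinh(\ell(\gamma)/2)}e^{-\ell(\gamma)^2/4t}.
\]
For $\ell(\gamma)\le 1$ we have $\frac{\ell(\gamma)}{2\sinh(\ell(\gamma)/2)}\ge c_0$ for a universal $c_0>0$ (indeed $\sinh(x/2)\le x/2\cdot\cosh(1/2)$ on $[0,1]$). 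Hence
\[
\int_0^1\frac{S_X(t)}{t}\,dt\ \ge\ c_0\sum_{\ell(\gamma)\le 1}\int_0^1\frac{e^{-t/4}}{(4\pi t)^{1/2}}\,e^{-\ell(\gamma)^2/4t}\,\frac{dt}{t}.
\]
Now for each fixed $\ell=\ell(\gamma)\in(0,1]$ I substitute $t=\ell^2 s$ (or $u=\ell^2/(4t)$) to see that $\int_0^1 t^{-3/2}e^{-\ell^2/4t}\,dt = \ell^{-1}\int_{\ell^2/4}^\infty \cdots$, and since $\ell\le 1$ the lower limit is bounded above by $1/4$, so the remaining integral is bounded below by a universal positive constant; the factor $e^{-t/4}\ge e^{-1/4}$ on $[0,1]$ is harmless. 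This yields $\int_0^1\frac{S_X(t)}{t}\,dt\ge D_1\sum_{\ell(\gamma)\le 1}\ell(\gamma)^{-1}$ with $D_1$ universal, and assembling with the tail bound completes the proof.

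I expect the main obstacle to be the tail estimate $\int_1^\infty\frac{S_X(t)-1}{t}\,dt\ge -D_2(g-1)$: one must be careful that $S_X(t)-1$ can be quite negative (when there are no short geodesics, $S_X(t)$ is tiny for moderate $t$), and the honest bound $S_X(t)\ge 0$ alone gives a divergent $-\int_1^\infty t^{-1}dt$. The fix is to use the trace formula \eqref{sel-trace-formula} to write $S_X(t)-1 = \big(\sum_{j\ge 1}e^{-t\lambda_j}\big) - \frac{(g-1)e^{-t/4}}{1}\int_{-\infty}^\infty re^{-tr^2}\tanh(\pi r)\,dr$, observe the first term is $\ge 0$, and check that $\int_1^\infty \frac{e^{-t/4}}{t}\big(\int_{-\infty}^\infty re^{-tr^2}\tanh(\pi r)\,dr\big)dt$ is a finite universal constant (the integrand decays like $t^{-5/2}e^{-t/4}$ at infinity since $\int re^{-tr^2}\tanh(\pi r)dr\asymp t^{-3/2}$), so the tail is bounded below by $-D_2'(g-1)$ for universal $D_2'$. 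The other steps — positivity to isolate the short-geodesic contribution in $S_X$ near $t=0$, the elementary bound on $\ell/\sinh(\ell/2)$, and the change of variables giving the $\ell^{-1}$ — are routine.
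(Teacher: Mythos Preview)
Your proposal is correct and follows essentially the same approach as the paper: the tail $\int_1^\infty\frac{S_X(t)-1}{t}\,dt$ is bounded below via the Selberg trace formula \eqref{sel-trace-formula} by dropping the nonnegative $\sum_{j\ge 1}e^{-t\lambda_j}$ and integrating the Plancherel term to a universal multiple of $(g-1)$, while the short-time integral is handled by restricting to $k=1$, $\ell(\gamma)\le 1$ and using the substitution $x=\ell/(2\sqrt{t})$ (equivalently the paper's $\tilde G(u)$ estimate) to extract the $1/\ell(\gamma)$ factor. The paper packages the short-time computation via the auxiliary function $\tilde G(u)=\int_0^1 t^{-3/2}e^{-t/4-u^2/4t}\,dt$ and the bound $\tilde G(u)\ge 4u^{-1}e^{-1/4}e^{-(u+2)^2/4}$, but this is exactly your change of variables.
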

\begin{proof}
Define  $$
 \tilde{G}(u)=\int_0^1t^{-\frac{3}{2}}e^{-\frac{1}{4}t-\frac{u^2}{4t}}dt.$$
Then we have $$
\int_0^1 \frac{S_X(t)}{t}dt=\sum\limits_{m,\gamma}\frac{\ell(\gamma)}{4\sqrt{\pi}\sinh(\frac{m\ell(\gamma)}{2})}\tilde{G}(m\ell(\gamma)).
$$
Since $$e^{\frac{1}{4}}\tilde{G}(u)\geq \int_0^1t^{-\frac{3}{2}}e^{-\frac{u^2}{4t}}dt=\frac{4}{u}\int_{\frac{u}{2}}^\infty e^{-x^2}dx\geq \frac{4}{u}\int_{\frac{u}{2}}^{\frac{u+2}{2}} e^{-x^2}dx
\geq \frac{4}{u}e^{-\frac{(u+2)^2}{4}},
$$
we have  \begin{equation}\label{ineq01}
    \int_0^1 \frac{S_X(t)}{t}dt\geq D_1\cdot \left( \sum\limits_{\ell(\gamma)\leq 1}\frac{1}{\ell(\gamma)}\right)
\end{equation}
for some uniform constant $D_1>0$.
By \eqref{sel-trace-formula}, we have 
\[ \frac{S_X(t)-1}{t}\geq -(g-1)\cdot \frac{e^{-\frac{t}{4}}}{t} \cdot \int_{-\infty}^{\infty}re^{-tr^2}\tanh(\pi r)dr.\]
This implies
\begin{equation}\label{ineq1infty}
    \int_1^\infty \frac{S_X(t)-1}{t}dt\geq - D_2 \cdot (g-1)
\end{equation} for some uniform  constant $D_2>0$.
    Then the conclusion follows from \eqref{eq-def-logdetlaplace-recall},  \eqref{ineq01} and \eqref{ineq1infty}.
\end{proof}

\begin{proof}[Proof of Theorem \ref{mainthm1}]
 For $\beta\geq 2$, by Proposition \ref{sysbetacritical} and  Lemma \ref{lowbound} we have that the integral of $\left|\log \det (\Delta_X) 
 \right|^\beta$
diverges. Now we  assume that $0<\beta<2$. For any $\eps>0$, we define
$$\mathcal{A}_g(\eps):=\left\{X\in \sM_g; \ \frac{\log \det (\Delta_X)}{4\pi(g-1)}\in (E-\eps, E+\eps)\right\}.$$
By the result \eqref{e-Naud-1} of Naud,
\be
\lim\limits_{g\to \infty}\Prob(\mathcal{A}_g(\eps))=1. \nonumber
\ene
Since $\eps>0$ is arbitrary, we clearly have
\be 
\liminf\limits_{g\to\infty}\E\left[\left|\frac{\log\det(\Delta_X)}{4\pi(g-1)}\right|^\beta \right]\geq E^\beta.
\ene

\noindent Next we prove the other side bound. Set $\chi_{\mathcal{A}_g^c(\eps)}$  to be the characteristic function of the complement $\mathcal{A}_g^c(\eps)$ of  $\mathcal{A}_g(\eps)\subset \sM_g$.
By H\"older's inequality and Lemma \ref{finite-C-beta} we have that as $g\to \infty$,

\be\label{e-o1}
\begin{aligned}
&\frac{1}{V_g}\int_{\mathcal{A}_g^c(\epsilon)}\left|\frac{\log\det(\Delta_X)}{4\pi(g-1)}\right|^\beta dX=\E\left[\chi_{\mathcal{A}_g^c(\eps)}\cdot\left|\frac{\log\det(\Delta_X)}{4\pi(g-1)}\right|^\beta \right]\\
& \leq  \E\left[\chi_{\mathcal{A}_g^c(\eps)}^p\right]^{\frac{1}{p}}\cdot \E\left[ \left|\frac{\log\det(\Delta_X)}{4\pi(g-1)}\right|^{q\beta}\right]^{\frac{1}{q}}\\
&\prec \left(\Prob(\mathcal{A}_g^c(\eps))\right)^{\frac{1}{p}}\\
&=o(1),
\end{aligned}
\ene
where $\frac{1}{p}+\frac{1}{q}=1$ and $q\cdot\beta<2$. Then it follows from \eqref{e-o1} that
\be
\begin{aligned}
&\E\left[\left|\frac{\log\det(\Delta_X)}{4\pi(g-1)}\right|^\beta \right]=\frac{1}{V_g}\int_{\mathcal{A}_g(\epsilon)}\left|\frac{\log\det(\Delta_X)}{4\pi(g-1)}\right|^\beta dX\\
&+\frac{1}{V_g}\int_{\mathcal{A}_g^c(\epsilon)}\left|\frac{\log\det(\Delta_X)}{4\pi(g-1)}\right|^\beta dX \\
&\leq (E+\eps)^\beta+o(1).
\end{aligned} \nonumber
\ene
By letting $g\to \infty$ and $\eps\to 0$, we get
\be
\limsup\limits_{g\to\infty}\E\left[\left|\frac{\log\det(\Delta_X)}{4\pi(g-1)}\right|^\beta \right]\leq E^\beta.
\ene
The proof is complete.
\end{proof}

\section{Proof of Theorem \ref{mainthm2}}\label{s-mt2}

In this section we prove Theorem \ref{mainthm2}.

By \eqref{eq-det-zetaprime} and \eqref{eq-def-logdetlaplace}, we have 
\be\label{e-z0-1}
\log Z_0^\prime(1)=\gamma_0-\int_0^1\frac{S_X(t)}{t}dt-\int_1^\infty\frac{S_X(t)-1}{t}dt.
\ene
Let $C>0$ and $L=2\arcsinh 1$. 
For any $ \eta\in (0, \frac{3}{16}) $ and $\alpha \in (0,1)$, following \cite{naud2023determinants}, we define 
\be
A(g):=\left\{X\in \sM_g;\ \begin{aligned}
		&\lambda_1(X)\geq \eta \\
		\#\{(\gamma,m)\in \mathcal{P}\times \mathbb{N},\ & m\ell_\gamma(X)\leq L\}\leq C\cdot g^\alpha
	\end{aligned}\right\}.
\ene
Now we recall two results in \cite{wu2022random} and \cite{naud2023determinants} to bound the size of $A(g)$. First, it is shown by Wu-Xue (see \cite[Subsection 7.3]{wu2022random} or \cite[Theorem 29]{WX22pgt}) that for any $\delta>0$,
\begin{equation}\label{Ag-1}
    \Prob\left(X\in \sM_g; \lambda_1(X)< \eta \right)=O_{\delta,\eta}\left(g^{1+\delta-4\sqrt{\frac{1}{4}-\eta}}\right),
\end{equation}
where the implied constant only depends on $\delta$ and $\eta$. Second, it is shown by Naud in \cite[Section 4.3]  {naud2023determinants} that for any $\epsilon>0$,
\[
\begin{aligned}
&\Prob\left(    \#\{(\gamma,m)\in \mathcal{P}\times \mathbb{N},\  m\ell_\gamma(X)\leq L\} <2L\area(X)^{2\epsilon}   \right)\\
&=1-O\left(\frac{1}{\area(X)^{\epsilon}}\right),
\end{aligned}
\]
where $\area(X)=4\pi(g-1)$. Then it follows that for any $\epsilon<\frac{\alpha}{2}$, if  $g$  is 
large enough,
\begin{equation}\label{Ag-2}
    \Prob\left(    \#\{(\gamma,m)\in \mathcal{P}\times \mathbb{N},\  m\ell_\gamma(X)\leq L\} \geq C\cdot g^\alpha \right)=O\left(\frac{1}{g^{\epsilon}}\right).
\end{equation}

\noindent Combining \eqref{Ag-1} and \eqref{Ag-2},  we have that there exists a positive constant $\epsilon_0=\eps_0(\eta,\alpha)$ depending only on $\eta$ and $\alpha$ such that
\be\label{e-l-1}
\Prob\left(A(g)\right)=1-O\left(\frac{1}{g^{\epsilon_0}}\right).
\ene

\noindent By \eqref{e-z0-1}, we have the following natural upper bound for $\E\left[\left|\frac{\log Z_0^\prime(1)}{4\pi(g-1)}\right|\right]$:
\begin{equation}\label{eq-sep log Z_0^prime}
    \begin{aligned}
       & \E\left[\left|\frac{\log Z_0^\prime(1)}{4\pi(g-1)}\right|\right]=\frac{1}{V_g}\int_{\M_g}\left|\frac{\log Z_0^\prime(1)}{4\pi(g-1)} \right| dX\\
        \leq & \underbrace{\frac{1}{V_g}\int_{A(g)^c}\left|\frac{\log Z_0^\prime(1)}{4\pi(g-1)}\right| dX}_{\rm I}+\frac{\Vol(A(g))}{V_g}\cdot \frac{\gamma_0}{4\pi(g-1)}\\
        +&\underbrace{\frac{1}{V_g}\frac{1}{4\pi(g-1)}\int_{A(g)}\int_{R(g)}^\infty \left|\frac{S_X(t)-1}{t}\right|dtdX}_{\rm{II}}+\frac{\log R(g)}{4\pi(g-1)}\frac{\Vol(A(g))}{V_g}\\
         +&\underbrace{\frac{1}{V_g}\frac{1}{4\pi(g-1)}\int_{A(g)}\int_0^{R(g)} \frac{S_X(t)}{t}dtdX}_{\rm{III}},\\
    \end{aligned}
\end{equation}
for some $R(g)>1$ that is to be determined later.

\subsection{Estimate of (\rm{I})} Recall that by \eqref{eq-det-zetaprime} we have
\[\frac{\log Z_0^\prime(1)}{4\pi(g-1)}=\frac{\log\det(\Delta_X)}{4\pi(g-1)}-E.\]
Then   it follows from Holder's inequality, Lemma \ref{finite-C-beta} and \eqref{e-l-1} that given any fixed $q\in (2,\infty),\ p\in (1,2)$ with $\frac{1}{q}+\frac{1}{p}=1$,  for large enough $g$,
\begin{equation}\label{bound-(a)}
\begin{aligned}
    & \frac{1}{V_g} \int_{A(g)^c}\left|\frac{\log Z_0^\prime(1)}{4\pi(g-1)}\right|dX\\
    \leq & \left(
    \Prob\left(A(g)^c\right)
    \right)^{\frac{1}{q}} \cdot \left(  
 \frac{1}{V_g}\int_{\M_g} \left|\frac{\log\det(\Delta_X)}{4\pi(g-1)}-E\right|^p dX\right)^{\frac{1}{p}}\\
\leq &\left(
    \Prob\left(A(g)^c\right)
    \right)^{\frac{1}{q}} \cdot \left(  
 \frac{1}{V_g}\int_{\M_g} 2^p \cdot \left(\left|\frac{\log\det(\Delta_X)}{4\pi(g-1)}\right|^p+E^p\right) dX\right)^{\frac{1}{p}}\\
    \prec & \frac{1}{g^{\frac{\eps_0}{q}}}.\\
\end{aligned}
\end{equation}

\subsection{Estimate of (\rm{II})} 
First by  \cite[Lemma 3.3]{naud2023determinants} we know that for $X\in A(g)$, $$|S_X(t)-1|\prec 4\pi(g-1)e^{-\eta t}.$$
It follows that for $R(g)>1$, 
\beqar
\int_{R(g)}^\infty \left|\frac{S_X(t)-1}{t}\right|dt&\prec& 4\pi(g-1)\int_{\eta R(g)}^\infty\frac{e^{-s}}{s}ds\\
&\prec& 4\pi(g-1)\frac{e^{-\eta R(g)}}{\eta R(g)}.
\eeqar
Therefore we have  
\begin{equation}\label{bound-(c)}
\frac{1}{V_g}\frac{1}{4\pi(g-1)}\int_{A(g)}\int_{R(g)}^\infty \left|\frac{S_X(t)-1}{t}\right|dtdX\prec  \frac{e^{-\eta R(g)}}{\eta R(g)}.
\end{equation}

\subsection{Estimate of (\rm{III})}
 Recall that
\[S_X(t)=\frac{e^{-\frac{t}{4}}}{(4\pi t)^\frac{1}{2}}\sum_{k\geq 1}\sum_{\gamma\in \mathcal{P}} \frac{\ell(\gamma)}{2\sinh\left(\frac{k\ell(\gamma)}{2}\right)}e^{-\frac{(k\ell(\gamma))^2}{4t}}.\]
Set $$L_g=[4 R(g)+2]$$ where $[\cdot]$ is the largest integer part. Now we separate $S_X(t)$ into three parts:
\bear
&&S_X^{L_g+}(t):=\frac{e^{-\frac{t}{4}}}{(4\pi t)^\frac{1}{2}}\sum_{m\ell(\gamma)>L_g} \frac{\ell(\gamma)}{2\sinh\left(\frac{m\ell(\gamma)}{2}\right)}e^{-\frac{(m\ell(\gamma))^2}{4t}},\\
&&S_X^{s,L_g-}(t):=\frac{e^{-\frac{t}{4}}}{(4\pi t)^\frac{1}{2}}\sum_{\emph{simple} \ \gamma, \ m\ell(\gamma)\leq L_g} \frac{\ell(\gamma)}{2\sinh\left(\frac{m\ell(\gamma)}{2}\right)}e^{-\frac{(m\ell(\gamma))^2}{4t}},\\
&&S_X^{ns,L_g-}(t):=\frac{e^{-\frac{t}{4}}}{(4\pi t)^\frac{1}{2}}\sum_{\emph{non-simple} \ \gamma, \ m\ell(\gamma)\leq L_g} \frac{\ell(\gamma)}{2\sinh\left(\frac{m\ell(\gamma)}{2}\right)}e^{-\frac{(m\ell(\gamma))^2}{4t}}.
\eear
Clearly we have
\be\label{s-s-x}
S_X(t)=S_X^{L_g+}(t)+S_X^{s,L_g-}(t)+S_X^{ns,L_g-}(t).
\ene
The following inequality will be applied several times in this subsection: for any $(m,\gamma)\in \mathbb{N}\times\mathcal{P}$, we have\be\label{deal-single-0-R(g)}
\begin{aligned}
    &\int_0^{R(g)}\frac{e^{-\frac{(m\ell(\gamma))^2}{4t}}}{t^{\frac{3}{2}}}dt\\
=&\frac{4}{m\ell(\gamma)}\int_{\frac{m\ell(\gamma)}{2\sqrt{R(g)}}}^\infty e^{-x^2}dx\\
\prec&\frac{1}{m\ell(\gamma)}e^{-\frac{(m\ell(\gamma))^2}{4R(g)}}.
\end{aligned}
\ene
Here we take the substitution of $x=\frac{m\ell(\gamma)}{2\sqrt{t}}$ in the first equation.

\subsubsection{Long closed geodesics.}
First, by \eqref{deal-single-0-R(g)} we have 
\begin{equation}\label{bound-L_g+(1)}
 \begin{aligned}
     &\frac{1}{4\pi(g-1)}\int_0^{R(g)}\frac{S_X^{L_g+}(t)}{t}dt\\
     = &\frac{1}{4\pi(g-1)}\sum_{m\ell(\gamma)\geq L_g}\int_0^{R(g)}\frac{e^{-\frac{t}{4}}}{t(4\pi t)^{\frac{1}{2}}}\frac{\ell(\gamma)}{2\sinh\frac{m\ell(\gamma)}{2}}e^{-\frac{(m\ell(\gamma))^2}{4t}}dt\\
     \prec&    \frac{1}{4\pi(g-1)}\sum_{m\ell(\gamma)\geq L_g} \frac{\ell(\gamma)}{2\sinh\frac{m\ell(\gamma)}{2}}\int_0^{R(g)}\frac{e^{-\frac{(m\ell(\gamma))^2}{4t}}}{t^{\frac{3}{2}}}dt
     \\
     \prec&\frac{1}{4\pi(g-1)}\sum_{m\ell(\gamma)\geq L_g}\frac{1}{2\sinh \frac{m\ell(\gamma)}{2}}e^{-\frac{(m\ell(\gamma))^2}{4R(g)}}.\\
\end{aligned}\end{equation}
Since $X\in A(g)$, 
 it follows from \cite[Lemma 3.2]{naud2023determinants} that for all $k\geq L_g$,
 $$\#\{(m,\gamma)\in \mathbb{N}\times\mathcal{P}:k\leq m\ell(\gamma)\leq k+1\}\prec 4\pi(g-1)e^k.$$ Then we have \begin{equation}\label{bound-L_g+(2)}
     \begin{aligned}
         &\frac{1}{4\pi(g-1)}\sum_{m\ell(\gamma)\geq L_g}\frac{1}{2\sinh \frac{m\ell(\gamma)}{2}}e^{-\frac{(m\ell(\gamma))^2}{4R(g)}}\\
         \leq &
         \frac{1}{4\pi(g-1)}\sum_{k=L_g}^\infty
          \frac{1}{2\sinh \frac{k}{2}}\cdot e^{-\frac{k^2}{4R(g)}}\cdot \#\{(m,\gamma)\in \mathbb{N}\times\mathcal{P}:k\leq m\ell(\gamma)\leq k+1\}\\
          \prec &
         \sum_{k=L_g}^\infty
          e^{-\frac{k^2}{4R(g)}}\cdot e^{\frac{1}{2}k}\\
          \prec&\int_{L_g-1}^\infty e^{-\frac{t^2}{4R(g)}+\frac{t}{2}}dt \\
\prec & \int_{L_g-R(g)-1}^\infty e^{-\frac{s^2}{4R(g)}+\frac{R(g)}{4}} ds\\
          \prec& \sqrt{R(g)} e^{\frac{R(g)}{4}-\frac{(L_g-R(g)-1)^2}{4R(g)}}\\
     \end{aligned}
 \end{equation} 
 Since $L_g>4R(g)+1$,
by \eqref{bound-L_g+(1)} and \eqref{bound-L_g+(2)} we have \begin{equation}\label{bound-L_g+}
    \left| \frac{1}{V_g}\frac{1}{4\pi(g-1)}\int_{A(g)}\int_0^{R(g)} \frac{S_X^{L_g+}(t)}{t}dtdX  \right|\prec \sqrt{R(g)} e^{-2R(g)}.
\end{equation}

\subsubsection{Simple closed geodesics.}
Recall that for a simple closed  geodesic $\gamma$,  $X\backslash\gamma$ is homotopic to either $S_{g-1,2}$ or $S_{i,1}\cup S_{g-i,1}$ for $1\leq i\leq \left[\frac{g}{2}\right]$.  Then it follows from Mirzakhani's integration formula, \ie, Theorem \ref{MIF} that for any $t>0$, 
\be \nonumber
\begin{aligned}
&\frac{1}{V_g}\int_{A(g)}S_X^{s,L_g-}(t)dX\\
\leq & \frac{1}{V_g}\int_{\M_g}S_X^{s,L_g-}(t)dX\\
\leq&\sum_{k}\frac{e^{-\frac{t}{4}}}{(4\pi t)^{\frac12}}\int_0^\infty \frac{x}{2\sinh \frac{kx}{2}}e^{-\frac{(kx)^2}{4t}}\\
&\times \frac{V_{g-1,2}(x,x)+\sum_{i=1}^{\left[\frac{g}{2}\right]}V_{i,1}(x)V_{g-i,1}(x)}{V_g} xdx.
\end{aligned}
\ene
For any $k>0$ and $x>0$, it is clear that $\frac{\sinh \left(\frac{x}{2}\right)}{\sinh \left(\frac{kx}{2}\right)}\leq \frac{1}{k}$. Thus, the above inequality together with \eqref{b-v-1} and \eqref{b-v-2} implies that 
    $$
\begin{aligned}
   &\frac{1}{V_g}\int_{A(g)}S_X^{s,L_g-}(t)dX\\
   \prec&\sum_{k}\frac{e^{-\frac{t}{4}}}{t^{\frac12}}\int_0^\infty \frac{1}{\sinh \frac{kx}{2}}e^{-\frac{(kx)^2}{4t}}\sinh^2\left(\frac{x}{2}\right)dx\\
    \prec&\sum_{k}\frac{e^{-\frac{t}{4}}}{ t^{\frac12}k}\int_0^\infty \left( 
   e^{-\left(\frac{kx}{2\sqrt{t}}-\frac{\sqrt{t}}{2k}\right)^2+
   \frac{t}{4k^2}}-e^{-\left(\frac{kx}{2\sqrt{t}}+\frac{\sqrt{t}}{2k}\right)^2+
   \frac{t}{4k^2}}
   \right)dx\\
   =&\sum_{k}\frac{e^{-\frac{t}{4}+\frac{t}{4k^2}}}{ t^{\frac12}k}\frac{2\sqrt{t}}{k}\int_{-\frac{\sqrt{t}}{2k}}^{\frac{\sqrt{t}}{2k}}e^{-s^2}ds\\
   \prec &\sum_{k}\frac{e^{-\frac{t}{4}+\frac{t}{4k^2}}\sqrt{t}}{k^3}\\
   \prec &\sqrt{t}.
\end{aligned}
$$
Therefore, it follows that \begin{equation}  \label{bound simple L_g-} 
\begin{aligned}
&\frac{1}{4\pi(g-1)}\frac{1}{V_g}\int_{A(g)}  \int_0^{R(g)}\frac{S_X^{s,L_g-}(t)}{t}dt dX\\
\prec & \frac{1}{4\pi(g-1)}\int_0^{R(g)}\frac{\sqrt{t}}{t}dt\\
\asymp & \frac{\sqrt{R(g)}}{g}.   
\end{aligned}
\end{equation}

\subsubsection{Non-simple closed geodesics.} For any non-simple  closed primitive geodesic $\gamma$ on $X$, $\ell(\gamma)\geq 4\arcsinh 1$ (see \eg \cite{buser2010geometry}). Then we have  
\begin{equation}\label{bound nonsimple SXt}
  \begin{aligned}
     S_X^{ns,L_g-}(t)  \prec & \frac{e^{-\frac{t}{4}}}{t^\frac{1}{2}}\sum_{m\ell(\gamma)\leq L_g}\frac{\ell(\gamma)e^{-\frac{(m\ell(\gamma))^2}{4t}}}{e^{\frac{m\ell(\gamma)}{2}}}\\
     \prec&  \frac{e^{-\frac{t}{4}}}{t^\frac{1}{2}}\sum_{\ell(\gamma)\leq L_g}\sum_{m\geq 1}\frac{\ell(\gamma)e^{-\frac{\ell^2(\gamma)}{4t}}}{e^{\frac{m\ell(\gamma)}{2}}}\\
\asymp&\frac{e^{-\frac{t}{4}}}{t^\frac{1}{2}}\sum_{\ell(\gamma)\leq L_g}\ell(\gamma)e^{-\frac{\ell^2(\gamma)}{4t}-\frac{\ell(\gamma)}{2}},
  \end{aligned}\nonumber
   \end{equation}
  where all the $\gamma's$ are taken over all non-simple primitive closed geodesics on $X$.  
By \eqref{deal-single-0-R(g)} we have
\begin{equation}\label{bound int sxt}
      \begin{aligned}
 \int_0^{R(g)}\frac{S_X^{ns,L_g-}}{t}dt \prec& \sum_{\ell(\gamma)\leq L_g}\ell(\gamma)e^{-\frac{\ell(\gamma)}{2}}\int_0^{R(g)}\frac{e^{-\frac{\ell^2(\gamma)}{4t}}}{t^{\frac{3}{2}}}dt\\
\prec& \sum_{\ell(\gamma)\leq L_g}e^{-\frac{\ell(\gamma)}{2}-\frac{\ell^2(\gamma)}{4R(g)}}\\
=&\sum_{\ell(\gamma)\leq L_g}e^{ -\frac{(\ell(\gamma)-R(g))^2}{4R(g)}+\frac{R(g)}{4}-\ell(\gamma)}\\
\leq&\sum_{\ell(\gamma)\leq L_g}e^{\frac{R(g)}{4}-\ell(\gamma)}.
\end{aligned}
  \end{equation}
As in \cite{MP19, nie2023large,wu2022random}, for each non-simple closed geodesic $\gamma\subset X$, one may let $Y\subset X$ be a subsurface of geodesic boundary such that $\gamma$ is filling in $Y$. Then for each non-simple closed geodesic $\gamma$ with $\ell(\gamma)\leq L_g$, by using elementary isoperimetric inequality, it is shown in \eg \cite[Proposition 7]{wu2022random} that
\[\ell(\partial Y)\leq 2L_g \quad \textit{and} \quad \area(Y)\leq 4L_g.\]
Now we count $\gamma$ according to the topology type of $Y$. Fix $\epsilon_1\in(0,1)$,  for $Y\simeq S_{g_0,k}$ with $1\leq |\chi(S_{g_0,k})|\leq 16$, by Theorem \ref{wuxue counting} we know that the number of filling geodesics in $Y$ with length $\leq T$ satisfies $$
 \#_f(Y,T)\leq c(\epsilon_1)e^{T-\frac{1-\epsilon_1}{2}\ell(\partial 
 Y)}
 $$
 where $c(\epsilon_1)>0$ only depends on $\epsilon_1$.
 Then we have
\begin{equation}\label{bound 1-16}
 \begin{aligned}
  \sum_{\substack{\ell(\gamma)\leq L_g\\
\gamma \textit{ fills }Y\simeq S_{g_0,k}}}e^{\frac{R(g)}{4}-\ell(\gamma)}=&  \sum_{Y\simeq S_{g_0,k}}\sum_{\gamma \textit{ fills } Y}e^{\frac{R(g)}{4}-\ell(\gamma)}1_{[0,L_g]}(\ell(\gamma))\\
   \leq &\sum_{Y\simeq S_{g_0,k}} \ \sum_{m=0}^{[L_g]} \ \sum_{\substack{m\leq \ell(\gamma)<m+1\\
\gamma \textit{ fills }Y}} e^{\frac{R(g)}{4}-\ell(\gamma)}1_{[0,L_g]}(\ell(\gamma))\\
   \prec &\sum_{Y\simeq S_{g_0,k}} \ \sum_{m=0}^{[L_g]}  e^{\frac{R(g)}{4}-m}\cdot e^{m+1-\frac{1-\epsilon_1}{2}\ell(\partial Y)}1_{[0,2L_g]}(\ell(\partial 
 Y))\\
   \prec & L_g\cdot \sum_{Y\simeq S_{g_0,k}}   e^{\frac{R(g)}{4}-\frac{1-\epsilon_1}{2}\ell(\partial Y)}1_{[0,2L_g]}(\ell(\partial 
 Y)).\\
   \end{aligned}
 \end{equation}
It is known from \cite[Proposition 35]{wu2022random} that  for any $1\leq |2g_0-2+k|\leq 16$ and $0<\eps_1<1$, we have
\begin{equation}\label{bound 1-16-step2}
    \begin{aligned}
        \frac{1}{V_g}\int_{\M_g}\sum_{Y\simeq S_{g_0,k}}   e^{-\frac{1-\epsilon_1}{2}\ell(\partial Y)}1_{[0,2L_g]}(\ell(\partial 
 Y))dX\prec \frac{L_g^{66}e^{\eps_1L_g} }{g}.
    \end{aligned}
\end{equation}
Combining \eqref{bound 1-16} and \eqref{bound 1-16-step2}, we have 
\begin{equation}\label{int 1-16}
 \begin{aligned}
 \frac{1}{V_g}\int_{\M_g}\sum_{1\leq |\chi(Y)|\leq 16} \sum_{\gamma \textit{ fills } Y} e^{\frac{R(g)}{4}-\ell(\gamma)}  1_{[0,L_g]}(\ell(\gamma)) dX \prec \frac{L_g^{67}e^{\frac{R(g)}{4}+\epsilon_1L_g}}{g}.
   \end{aligned}
 \end{equation}
For $17\leq |\chi(S_{g_0,k})|\leq [\frac{4L_g}{2\pi}]$, it follows from Lemma \ref{counting in buser} that the number of filling closed geodesics on $Y$ with length $\leq T$ satisfies $$
\#_f(Y,T)\leq (g-1)e^{T+6}.
$$
Then \begin{equation}\label{bound 17+}
\begin{aligned}
   &  \sum_{Y\simeq S_{g_0,k}}\sum_{\gamma \textit{ fills } Y}e^{\frac{R(g)}{4}-\ell(\gamma)}1_{[0,L_g]}(\ell(\gamma))\\
   \prec& \sum_{Y\simeq S_{g_0,k}} \sum_{m=0}^{[L_g]}\sum_{\substack{m\leq \ell(\gamma)<m+1\\
\gamma \textit{ fills } Y}} e^{\frac{R(g)}{4}-\ell(\gamma)} 1_{[0,2L_g]}(\ell(\partial 
 Y))\\
   \prec &g \sum_{Y\simeq S_{g_0,k}} \sum_{m=0}^{[L_g]} e^{\frac{R(g)}{4}-m}\cdot e^m 1_{[0,2L_g]}(\ell(\partial 
 Y))\\
   \prec & g\sum_{Y\simeq S_{g_0,k}}  L_g  e^{\frac{R(g)}{4}}1_{[0,2L_g]}(\ell(\partial 
 Y))\\
 \prec & ge^{\frac{L_g}{2}} L_g\sum_{Y\simeq S_{g_0,k}}   e^{\frac{R(g)}{4}-\frac{\ell(\partial Y)}{4}}1_{[0,2L_g]}(\ell(\partial 
 Y)).
   \end{aligned}
\end{equation}
By \cite[Proposition 33]{wu2022random} we have \begin{equation}\label{bound 17+ step2}
   \frac{1}{V_g} \int_{\M_g} \sum_{17\leq |\chi(Y)|\leq \left[\frac{4L_g}{2\pi}\right]}e^{-\frac{\ell(\partial Y)}{4}}1_{[0,2L_g]}(\ell(\partial 
 Y)) dX\prec\frac{L_ge^{\frac{7}{2}L_g}}{g^{17}}.
\end{equation}
Combining \eqref{bound 17+} and \eqref{bound 17+ step2}, we have \begin{equation}\label{int 17+}
\begin{aligned}
 &\frac{1}{V_g} \int_{\M_g} \sum_{17\leq |\chi(Y)|\leq \left[\frac{4L_g}{2\pi}\right]} \sum_{\gamma \textit{ fills } Y}e^{\frac{R(g)}{4}-\ell(\gamma)}1_{[0,L_g]}(\ell(\gamma)) dX\\
 \prec &\frac{L_g^2e^{\frac{R(g)}{4}+4L_g}}{g^{16}}.
 \end{aligned}
\end{equation}
Finally, combining \eqref{bound int sxt}, \eqref{int 1-16}, and \eqref{int 17+}, we have that for any $0<\epsilon_1<1$,
 \begin{equation}\label{bound nonsimple L_g-}
\begin{aligned} & \frac{1}{V_g} \frac{1}{4\pi(g-1)}\int_{A(g)} \int_0^{R(g)}\frac{S_X^{ns,L_g-}(t)}{t}dt dX \\
   \prec & \frac{L_g^{67}e^{\frac{R(g)}{4}+ \epsilon_1L_g } }{g^2}+\frac{L_g^2e^{\frac{R(g)}{4}+4L_g}}{g^{17}}.
\end{aligned}
\end{equation}

\subsubsection{Upper bound of \rm{(III)}} Since $S_X(t)=S_X^{L_g+}(t)+S_X^{s,L_g-}(t)+S_X^{ns,L_g-}(t)$,
 if we take $L_g=[4R(g)+2]$,  by \eqref{bound-L_g+}, \eqref{bound simple L_g-}  and \eqref{bound nonsimple L_g-} we have that for any $\epsilon_1>0$, \begin{equation}\label{bound-(e)}
     \begin{aligned}
 &\left|\frac{1}{V_g}\int_{A(g)} \frac{1}{4\pi(g-1)} \int_0^{R(g)}\frac{S_X(t)}{t}dt dX\right|\\
 \prec &\sqrt{R(g)}e^{-2R(g)}+\frac{\sqrt{R(g)}}{g}+\frac{R(g)^{67}e^{\left(\frac{1}{4}+4\epsilon_1\right)R(g)}}{g^2}+\frac{R(g)^2e^{\frac{65}{4}R(g)}}{g^{17}}.
  \end{aligned}
\end{equation}
\begin{rem*}
    Actually if we take $L_g=[kR(g)+2]$ for $k>2$, the argument above also works.
\end{rem*}

\subsection{The Endgame}  Now we finish the proof of Theorem \ref{mainthm2}.\begin{proof}[Proof of Theorem \ref{mainthm2}]
We fix $\eta\in(0,\frac{3}{16})$, $\alpha\in(0,1)$ and $R(g)=a\log g$ for some $a>0$. Now we bound each term in \eqref{eq-sep log Z_0^prime}. 
By
\eqref{bound-(a)} we have \be\label{final-I}
({\rm{I}})\prec g^{-\frac{\epsilon_0}{q}},\ene
where $\epsilon_0=\epsilon(\eta,\alpha)>0$ and $q>2$. By \eqref{bound-(c)} we have \be\label{final_II}
({\rm{II}})\prec g^{-a \eta }.\ene 
By \eqref{bound-(e)}, for any $\epsilon_1>0$ we have
\be\label{final-III}
({\rm{III}})\prec g^{-2a+\epsilon_1}+g^{-1+\epsilon_1}+g^{-2+\frac{1}{4}a+\epsilon_1}+g^{-17+\frac{65}{4}a+\epsilon_1}.
\ene

\noindent Now \eqref{eq-sep log Z_0^prime} together with \eqref{final-I}, \eqref{final_II} and \eqref{final-III} implies that if we take $0<a<\frac{68}{65}$, then for any $$0<\delta<\min\left\{1,\frac{\epsilon_0}{q},a\eta,2-\frac{1}{4}a,17-\frac{65}{4}a\right\},$$ we obtain $$
\E\left[\frac{\left| \log Z_0^\prime(1) \right|}{4\pi(g-1)}\right]\prec g^{-\delta}
$$
as desired.
\end{proof}

\bibliographystyle{amsalpha}
\bibliography{Reference}

\end{document}